\documentclass[12pt]{article}

\usepackage{latexsym,amsfonts,amsmath,amsthm,amssymb, epsfig}
\usepackage{color}

\usepackage[vcentering,dvips]{geometry}
\newtheorem{thm}{Theorem}

\newtheorem{prop}[thm]{Proposition}
\newtheorem{cor}{Corollary}

\newtheorem{remark}{Remark}

\theoremstyle{remark}

\theoremstyle{definition}
\newtheorem{dfn}[thm]{Definition}

\newcommand{\N}{\mathbb{N}}

\newcommand{\D}{\mathbb{D}}

\newcommand{\sgn}{{\rm sign}}

\newcommand{\ex}{{\rm ext}}
\newcommand{\bx}{{\rm box}}
\newcommand{\mx}{{\rm mix}}
\newcommand{\qmc}{{\rm QMC}}

\newcommand{\rd}{\,{\rm d}}
\newcommand{\bsx}{{\boldsymbol x}}
\newcommand{\bsy}{{\boldsymbol y}}
\newcommand{\bst}{{\boldsymbol t}}
\newcommand{\bsa}{{\boldsymbol a}}
\newcommand{\bsb}{{\boldsymbol b}}

\newcommand{\uu}{\mathfrak{u}}

\newcommand{\cP}{\mathcal{P}}
\newcommand{\cA}{\mathcal{A}}

\title{Extreme $L_p$ discrepancy, numerical integration and the curse of dimensionality}  

\author{Erich Novak and  Friedrich Pillichshammer}

\date{\today} 

\begin{document}

\maketitle

\begin{abstract}
The classical notion of extreme $L_p$ discrepancy is a quantitative measure for the irregularity 
of distribution of finite point sets in the $d$-dimensinal unit cube. In this paper we find a dual integration problem whose worst-case error is exactly the extreme $L_p$ discrepancy of the underlying integration nodes. Studying this integration problem we show that the extreme $L_p$ discrepancy suffers from the curse of 
dimensionality for all $p \in (1,\infty)$. It is known that the problem is tractable for $p=\infty$; the case $p=1$ stays open.
\end{abstract}

\centerline{\begin{minipage}[hc]{130mm}{
{\em Keywords:} Discrepancy, numerical integration, curse of dimensionality, tractability, quasi-Monte Carlo\\
{\em MSC 2010:} 11K38, 65C05, 65Y20}
\end{minipage}}

%E 
\section{Introduction}\label{sec:intro}

For a set $\cP$ consisting of $N$ points $\bsx_1,\bsx_2,\ldots,\bsx_N$ in the $d$-dimensional unit-cube $[0,1)^d$ the local discrepancy with respect to a measurable test set $C$ is defined as $$\Delta_{\cP}(C)=\frac{|\{k \in \{1,2,\ldots,N\}\ : \ \bsx_k \in C\}|}{N}-{\rm volume}(C).$$ Using a class of test sets that are suitably parametrized, the local discrepancy can be considered as a function of this parameter. This is usually emphasized by the nomenclature ``local discrepancy function''. Taking a norm of the local discrepancy function leads to the notion of a discrepancy, which is a quantitative measure for the irregularity of distribution of the point set $\cP$ with respect to the uniform measure and the class of test sets. Many notions of discrepancy are based on a suitable $L_p$-norm and often a discrepancy is related to the worst-case error of a quasi-Monte Carlo (QMC) algorithm for numerical integration of a function from suitable function spaces.
The most prominent example is the usual star $L_p$ discrepancy, which uses as test sets the class 
of anchored hypercubes in $[0,1]^d$ of the form $[\boldsymbol{0},\bsb)=[0,b_1) \times \ldots 
\times [0,b_d)$ with $\bsb=(b_1,\ldots,b_d) \in [0,1]^d$. 
In this case the relation between star $L_p$ discrepancy and the 
corresponding integration problem such that the worst-case error equals the star $L_p$ discrepancy 
is well understood (see \cite[Chapter~9]{NW10} or the recent papers \cite{NP23,NP25}). 
We call this behavior a ``discrepancy-integration duality''. 

The inverse of a discrepancy for dimension $d$ and error threshold $\varepsilon \in (0,1)$ is the minimal number of points in $[0,1)^d$ required such that the minimal discrepancy is less or equal an $\varepsilon$ share of the initial discrepancy. For applications of QMC to high-dimensional problems, it is of great importance whether the inverse grows exponentially with the dimension (the curse of dimensionality) or not (tractability). Again, for the classical star $L_p$ discrepancy the behavior is for $p \in (1,\infty]$ well understood. We have tractability for $p=\infty$ (see \cite{hnww}) and the curse of dimensionality for $p \in (1,\infty)$ (see \cite{NP25}). The case $p=1$ is still open. 

In this paper we study the extreme $L_p$ discrepancy which uses as class of test sets all hypercubes $[\bsa,\bsb) \subseteq [0,1)^d$. At least for $p=\infty$, this discrepancy concept is a classic one that is discussed in the literature (see e.g. \cite{DT97,KN74}), but the more general $L_p$ case has also been investigated (see e.g. \cite{Gne05,HW12,MC94} or \cite[Sec.~9.5.5]{NW10}). However, in comparison to the star discrepancy, relatively little is known for extreme discrepancy. We know neither of a corresponding integration problem, such that a discrepancy-integration duality holds true, nor do we know about the behavior of the inverse extreme $L_p$ discrepancy with respect to the dimension for $p \not\in \{2,\infty\}$.

In this article, we investigate these questions and identify an integration problem where the worst-case error and the extreme $L_p$ discrepancy coincide (discrepancy-integration duality). Furthermore, we prove that the extreme $L_p$ discrepancy for $p \in (1,\infty)$ suffers from the curse of dimensionality.

The paper is organized as follows. In Section~\ref{sec:disc} we give a rigorous definition of extreme $L_p$ discrepancy and its extension to generalized extreme $L_p$ discrepancy.  A related dual integration problem is introduced in Section~\ref{sec:int}. The main result of this section, which is Theorem~\ref{thm:main}, states that the worst-case error for integration with arbitrary linear rules (which comprise the concept of QMC algorithms) equals the generalized extreme $L_p$ discrepancy. Furthermore, we study in detail the initial error of our integration problem and figure out a so-called worst-case function, i.e., a function whose integral equals the initial error. In Section~\ref{sec:curse} we prove that the extreme $L_p$ discrepancy for $p \in (1,\infty)$ as well as the dual integration problem suffers from the curse of dimensionality. The proof of this result is given in Section~\ref{sec:proof}.

\section{Extreme $L_p$ discrepancy} \label{sec:disc}

The {\it extreme $L_p$ discrepancy} uses as test sets arbitrary hypercubes contained in the unit cube. Let $$\D_d:=\{(\bsa,\bsb) \in [0,1]^d \times [0,1]^d \ : \ \bsa \le \bsb\},$$ where for $\bsa=(a_1,a_2,\ldots,a_d)$ and $\bsb=(b_1,b_2,\ldots,b_d)$ in $[0,1]^d$ we write $\bsa \leq \bsb$ if and only if $a_j \le b_j$ for all $j \in \{1,\ldots,d\}$. For $(\bsa,\bsb) \in \D_d$ let $[\bsa,\bsb)=[a_1,b_1)\times [a_2,b_2) \times \ldots \times [a_d,b_d)$.  Throughout, let ${\bf 1}_{[\bsa,\bsb)}$ denote the indicator function of $[\bsa,\bsb)$. Then
$$\Delta_{\cP}([\bsa,\bsb))=\frac{1}{N} \sum_{k=1}^N \mathbf{1}_{[\bsa,\bsb)}(\bsx_k) - \prod_{j=1}^d (b_j-a_j)$$ and
the extreme $L_p$ discrepancy of $\cP$ is then defined as
$$L_{p,N}^{\ex}(\cP):=\left(\int_{\D_d} |\Delta_{\cP}([\bsa,\bsb))|^p\rd (\bsa,\bsb)\right)^{1/p},$$ for $p \in [1,\infty)$ and
$$L_{\infty,N}^{\ex}(\cP):=\sup_{(\bsa,\bsb) \in \D_d}  |\Delta_{\cP}([\bsa,\bsb))|,$$ for $p =\infty$.

An extension of the classical definition additionally uses a set of real weights $\cA=\{c_1,\ldots,c_N\}$ corresponding 
to the points $\cP=\{\bsx_1,\ldots,\bsx_N\}$. Then the generalized discrepancy function with weights $\cA$ is
\[
\Delta_{\cP,\cA}([\bsa,\bsb))
   = \sum_{k=1}^N c_k\,\mathbf{1}_{[\bsa,\bsb)}(\bsx_k) 
     - \prod_{j=1}^d (b_j-a_j),
\]
and the generalized extreme $L_p$ discrepancy with weights $\cA$ is 
\[
L_{p,N}^{\ex}(\cP,\cA)
  = \left(\int_{\D_d}
        |\Delta_{\cP,\cA}([\bsa,\bsb))|^p \rd(\bsa,\bsb) \right)^{1/p},
\]
for $p\in[1,\infty)$ with the usual modification for $p=\infty$. 
If $c_1=\dots=c_N=1/N$, we recover the classical extreme $L_p$ discrepancy $L_{p,N}^{\ex}(\cP)$ of the point set $\cP$.

It is easily seen that the so-called initial extreme $L_p$ discrepancy, i.e., the extreme $L_p$ discrepancy of the emptyset, or, in other words, the $L_p$-norm of the volume function $(\bsa,\bsb) \in \D_d \mapsto {\rm volume}([\bsa,\bsb))$, equals $$L_{p,0}^{\ex}=\left\{ 
\begin{array}{ll}
((p+1)(p+2))^{-d/p} & \mbox{if $p \in [1,\infty)$,}\\
1 & \mbox{if $p=\infty$.} 
\end{array}
\right.$$
This is clear for $p=\infty$. For $p \in [1,\infty)$ we have
\begin{align*}
(L_{p,0}^{\ex})^p = & \int_{\D_d} \prod_{j=1}^d (b_j-a_j)^p \rd (\bsa,\bsb) 
= \left(\int_0^1 \int_a^1 (b-a)^p \rd b \rd a \right)^d\\
= & \frac{1}{((p+1)(p+2))^d},
\end{align*}
and the result follows by taking the $p$-th root.

\section{A related dual integration problem}\label{sec:int}

Throughout we assume that $p,q \in [1,\infty]$ are H\"older conjugates, i.e., $
\frac1p+\frac1q=1$. For $p,q \in (1, \infty)$ this implies that $\frac{q}{p}=q-1$.

\paragraph{Representation-space definition via box indicators.}

For $d$-fold hypercubes  $[\bsa,\bsb]=[a_1,b_1]\times \ldots \times [a_d,b_d]$ we use the indicator function
\[
{\bf 1}_{[\bsa,\bsb]}(\bsx) = \prod_{j=1}^d {\bf 1}_{[a_j,b_j]}(x_j) \quad \mbox{for $\bsx=(x_1,\ldots,x_d)$.}
\]

\begin{dfn}[Representation-space definition of $F_{d,q}$]\label{def:box}
Define a linear operator $T_d$ on $L_q(\D_d)$ via 
\begin{equation}\label{eq:rep}
(T_d c)(\bsx):=\int_{\D_d} c(\bsa,\bsb)\, {\bf 1}_{[\bsa,\bsb]}(\bsx) \rd (\bsa,\bsb)
\quad\text{for }c\in L_q(\D_d).
\end{equation}
Define $$F_{d,q}:=\{f \ : \ c\in L_q(\D_d), \ f=T_dc\}$$ and equip this space with the norm 
\begin{equation}\label{eq:norm-rep}
\|f\|_{F_{d,q},\bx}:=\inf\left\{\|c\|_{L_q(\D_d)}:\;c\in L_q(\D_d), \ f=T_dc\right\}.
\end{equation}
\end{dfn}

\begin{remark}\rm
For every $c \in L_q(\D_d)$ the function $f=T_d c$ is well defined a.e., belongs to $L_q([0,1]^d)$, is absolutely continuous in each variable, and satisfies the boundary condition $f(\bsx)=0$ whenever some $x_j \in \{0,1\}$.   
\end{remark}

\begin{remark}\rm
We confess that we do not know whether the norm $\|\cdot\|_{F_{d,q},\bx}$ is a tensor-product norm. In any case, for functions $f(\bsx)=f_1(x_1)\cdots f_d(x_d)$ with univariate functions $f_1,\ldots,f_d$ we have $\|f\|_{F_{d,q},\bx} \le  \|f_1\|_{F_{1,q},\bx} \cdots \|f_d\|_{F_{1,q},\bx}$.
\end{remark}

\begin{remark}\label{remark3}\rm
Likewise, one might equip $F_{d,q}$ with a mixed Sobolev norm  
\[
\|f\|_{F_{d,q},\mx} := \| D f\|_{L_q([0,1]^d)},
\]
with the mixed derivative $D f:=\partial_1\cdots\partial_d f$ (which exists in the weak sense and belongs to $L_q([0,1]^d)$).
The two norms are equal for $q=2$, i.e. $\| f \|_{F_{d,q},\bx} = \|f\|_{F_{d,q},\mx}$, and it can be shown that the two norms are always equivalent, although with implied constants that depend 
on the dimension $d$. In more detail, 
\begin{equation}\label{norm:equiv}
2^{-d} \|f\|_{F_{d,q},\mx} \le \|f\|_{F_{d,q},\bx} \le 2^d  \|f\|_{F_{d,q},\mx}.
\end{equation}
See  Appendix~\ref{sec:appA} for details.
\end{remark}

In the following we will work with the space $F_{d,q}$ equipped with the box-norm $\|\cdot\|_{F_{d,q},\bx}$.

\paragraph{Integration problem and worst-case error.}

As above, let $\cP=\{\bsx_1,\ldots,\bsx_N\}$ be an $N$-element point set in $[0,1)^d$ and 
let $\cA=\{c_1,\ldots,c_N\}$ be corresponding weights. For $f \in F_{d,q}$ let
\begin{equation}\label{def:I:Q}
I(f):=\int_{[0,1]^d} f(\bsx)\rd \bsx
\qquad \mbox{and}\qquad
Q_{\cP,\cA}(f):=\sum_{k=1}^N c_k f(\bsx_k).
\end{equation}
If we restrict to weights $c_1=\ldots =c_N=1/N$, we denote these by $\cA^{\qmc}$, 
then $Q_{\cP,\cA^{\qmc}}(f):=\frac{1}{N}\sum_{k=1}^N f(\bsx_k)$, which is a so-called quasi-Monte Carlo (QMC) algorithm. 
Define the error functional
\[
L(f):=I(f)-Q_{\cP,\cA}(f).
\]
With $\mu=\sum_{k=1}^N c_k\delta_{\bsx_k}-\lambda$,  where $\delta_{\bsx_k}$ denotes the Dirac measure concentrated in $\bsx_k$ and $\lambda$ the Lebesgue measure on $[0,1]^d$,  we have
\begin{equation}\label{eq:L-mu}
L(f)=\int_{[0,1]^d} f(\bsx) \rd\left(\lambda-\sum_{k=1}^N c_k\delta_{\bsx_k}\right)(\bsx)
= -\int_{[0,1]^d} f(\bsx) \rd\mu(\bsx).
\end{equation}
The worst-case error in $F_{d,q}$ (with respect to the box-norm $\|\cdot\|_{F_{d,q},\bx}$) is
\[
e(Q_{\cP,\cA};F_{d,q}) := \sup_{\|f\|_{F_{d,q},\bx}\le 1}|L(f)|.
\]

\begin{thm}[Discrepancy-integration duality]\label{thm:main}
Let $p,q \in [1,\infty ]$ be H\"older conjugates. For every point set 
$\cP=\{\bsx_1,\ldots,\bsx_N\}$ in $[0,1)^d$ and arbitrary corresponding real weights $\cA=\{c_1,\ldots,c_N\}$ we have
\[
e(Q_{\cP,\cA};F_{d,q}) \;=\; L_{p,N}^{\ex}(\cP,\cA).
\]
\end{thm}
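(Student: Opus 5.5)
The plan is to compute $L(T_d c)$ explicitly for an arbitrary representer $c\in L_q(\D_d)$ and then apply $L_p$–$L_q$ duality on $\D_d$. Take $f=T_dc$. Fubini, justified because $c\in L_q(\D_d)\subset L_1(\D_d)$ and the indicator is bounded, gives
\[
I(f)=\int_{\D_d} c(\bsa,\bsb)\prod_{j=1}^d (b_j-a_j)\rd(\bsa,\bsb).
\]
For the quadrature part, $f(\bsx_k)=\int_{\D_d} c(\bsa,\bsb)\mathbf{1}_{[\bsa,\bsb]}(\bsx_k)\rd(\bsa,\bsb)$. Since $c\in L_1$, I read this pointwise through the defining integral. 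The closed box $[\bsa,\bsb]$ and the half-open box $[\bsa,\bsb)$ differ, for fixed $\bsx_k$, only on the set of $(\bsa,\bsb)$ where some $b_j=x_{k,j}$. That set is Lebesgue null in $\D_d$, so $\mathbf{1}_{[\bsa,\bsb]}(\bsx_k)$ may be replaced by $\mathbf{1}_{[\bsa,\bsb)}(\bsx_k)$. Combining the two parts,
\[
L(T_dc)=-\int_{\D_d} c(\bsa,\bsb)\,\Delta_{\cP,\cA}([\bsa,\bsb))\rd(\bsa,\bsb).
\]

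Next comes the upper bound. By Hölder, $|L(f)|\le \|c\|_{L_q(\D_d)}\,L_{p,N}^{\ex}(\cP,\cA)$ for every representer $c$ of $f$. Taking the infimum over representers gives $|L(f)|\le \|f\|_{F_{d,q},\bx}\,L_{p,N}^{\ex}(\cP,\cA)$, and hence $e(Q_{\cP,\cA};F_{d,q})\le L_{p,N}^{\ex}(\cP,\cA)$.

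For the lower bound, I use the fact that $g:=\Delta_{\cP,\cA}(\cdot)$ is a bounded measurable function on $\D_d$, so it lies in $L_p$ for every $p$. For $1<p<\infty$, choose
\[
c=-\,\sgn(g)\,|g|^{p-1}/\|g\|_p^{p-1}
\]
(assuming $g\neq0$; otherwise there is nothing to prove). Then $\|c\|_q=1$ because $q(p-1)=p$, and $f=T_dc$ satisfies $\|f\|_{F_{d,q},\bx}\le 1$ and $L(f)=\|g\|_p$. For $p=1$, $q=\infty$, take $c=-\sgn(g)$, which gives $L(f)=\|g\|_1$.

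The case $p=\infty$, $q=1$ is the main obstacle. Here the supremum of $|g|$ need not be attained by any $L_1$ function, so I will approximate. For $\varepsilon>0$, the set $\{|g|>\|g\|_\infty-\varepsilon\}$ has positive measure in the essential-sup sense. Normalized signed indicators of this set, $c=-\sgn(g)\mathbf{1}_E/|E|$, then give $L(f)\ge \|g\|_{L_\infty}-\varepsilon$.

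There remains a subtlety: the paper defines $L_{\infty,N}^{\ex}$ as a pointwise supremum, while duality yields the essential supremum. I would check that these agree. The function $g$ is a polynomial minus a finite sum of indicators of conditions $a_j\le x_{k,j}<b_j$. It is therefore right-/left-continuous in a suitable sense, and every point $(\bsa,\bsb)$ can be approximated by a set of positive measure on which $g$ is close to $g(\bsa,\bsb)$. For example, perturb $\bsa$ slightly downward and $\bsb$ slightly downward while preserving membership; boundary points with $a_j=b_j$ are handled as limits. So the essential supremum equals the supremum. Together with the upper bound, this proves the identity for all Hölder-conjugate pairs $p,q$.
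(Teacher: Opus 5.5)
For $1\le p<\infty$ your argument is the paper's: the Fubini identity $L(T_dc)=-\int_{\D_d}c\,\Delta_{\cP,\cA}$, H\"older plus the infimum over representers, and the explicit extremal representers. Your remark that $\mathbf{1}_{[\bsa,\bsb]}(\bsx_k)$ and $\mathbf{1}_{[\bsa,\bsb)}(\bsx_k)$ differ only on the null set where some $b_j=x_{k,j}$ is a correct refinement that the paper leaves implicit. For $p=\infty$ your $\varepsilon$-argument is again the paper's. You are right that it only yields $\mathrm{ess\,sup}\,|\Delta_{\cP,\cA}|$, whereas $L_{\infty,N}^{\ex}$ is defined as a pointwise supremum. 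The paper passes over this by setting $M=\|\Delta_{\cP,\cA}\|_{L_\infty(\D_d)}$ and equating it with $L_{\infty,N}^{\ex}$.

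\textbf{The gap is your final step: the two suprema need not agree, so no argument can close it.} Your perturbation fails on the faces $\{a_j=0\}$. There $a_j$ cannot move downward inside $\D_d$, and moving it upward loses every node with $x_{k,j}=0$, which $\cP\subset[0,1)^d$ permits.

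A counterexample: take $d=N=1$, $x_1=0$, $c_1=2$.
\begin{itemize}
\item $\Delta_{\cP,\cA}([0,b))=2-b\to 2$ as $b\downarrow 0$, so $L_{\infty,1}^{\ex}(\cP,\cA)=2$.
\item $\Delta_{\cP,\cA}([a,b))=-(b-a)$ for all $a>0$, so the essential supremum of $|\Delta_{\cP,\cA}|$ is $1$.
\item The integration side agrees with the essential supremum. Every $f\in F_{1,1}$ has $f(0)=0$, so $Q_{\cP,\cA}\equiv 0$ on $F_{1,1}$ and $e(Q_{\cP,\cA};F_{1,1})=e(0;F_{1,1})=1\neq 2$.
\end{itemize}
Equal weights do not rescue this in dimension two. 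Take $\bsx_1=(0,\tfrac12)$, $\bsx_2=(\tfrac1{100},\tfrac12)$, $\bsx_3=(\tfrac1{100},\tfrac{51}{100})$ with weights $\tfrac13$. Then $\sup|\Delta_{\cP}|=0.9999$, attained in the limit by boxes $[0,\tfrac1{100}+\epsilon)\times[\tfrac12,\tfrac{51}{100}+\epsilon)$. But for almost every box $a_1>0$, so $\bsx_1$ is never counted and the essential supremum is only $0.99$.

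Hence for $p=\infty$ the identity, the paper's as well as yours, holds only in one of two corrected forms:
\begin{itemize}
\item with $L_{\infty,N}^{\ex}$ read as an essential supremum; or
\item under the extra hypothesis that no node has a coordinate equal to $0$.
\end{itemize}
Under the second hypothesis your perturbation idea does work. For $a_j=0$, move $a_j$ upward by less than $\min_k x_{k,j}$; move each $a_j>0$ and every $b_j$ slightly downward. You should state the $p=\infty$ case in one of these corrected forms.
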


\begin{proof}
Let $f\in F_{d,q}$ and choose $c\in L_q(\D_d)$ representing $f$ as in \eqref{eq:rep}.
Using \eqref{eq:L-mu} and Fubini, we compute
\begin{align*}
L(f)
&= -\int_{[0,1]^d}  f(\bsx) \rd\mu(\bsx)
= -\int_{[0,1]^d} \left(\int_{\D_d} c(\bsa,\bsb) \,{\bf 1}_{[\bsa,\bsb]}(\bsx)\rd (\bsa,\bsb)\right) \rd \mu(\bsx)\\
&= -\int_{\D_d} c(\bsa,\bsb)\left(\int_{[0,1]^d} {\bf 1}_{[\bsa,\bsb]}(\bsx)\,d\mu(\bsx)\right) \rd (\bsa,\bsb)\\
&= -\int_{\D_d} c(\bsa,\bsb) \,\mu([\bsa,\bsb]) \rd (\bsa,\bsb)\\
&= -\int_{\D_d} c(\bsa,\bsb)\,\Delta_{\cP,\cA}([\bsa,\bsb]) \rd (\bsa,\bsb).
\end{align*}
Now, by Hölder's inequality,
\[
|L(f)|
\le \|c\|_{L_q(\D_d)}\,\|\Delta_{\cP,\cA}\|_{L_p(\D_d)}
= \|c\|_{L_q(\D_d)}\,L_{p,N}^{\ex}(\cP,\cA).
\]
Taking the infimum over all representations of $f$ yields
\[
|L(f)| \le \|f\|_{F_{d,q},\bx}\,L_{p,N}^{\ex}(\cP,\cA).
\]
Hence $$e(Q_{\cP,\cA};F_{d,q})\le L_{p,N}^{\ex}(\cP,\cA).$$

It remains to prove equality. First we assume $p,q \in (1, \infty)$ and discuss the needed modifications 
for the endpoints later. If $\Delta_{\cP,\cA} \equiv 0$ then $L_{p,N}^{\ex}(\cP,\cA)=0$ and also $L(f)\equiv 0$, 
so equality holds. Assume now $\Delta_{\cP,\cA}\not\equiv 0$. Define 
\begin{equation}\label{eq:cstar}
c^{\ast}(\bsa,\bsb):=\frac{|\Delta_{\cP,\cA}([\bsa,\bsb])|^{p-2} \Delta_{\cP,\cA}([\bsa,\bsb])}{\|\Delta_{\cP,\cA}\|_{L_p(\D_d)}^{p-1}}
\quad\text{so that}\quad
\|c^{\ast}\|_{L_q(\D_d)}=1.
\end{equation}
Let $f^\ast=T_d c^\ast$ with coefficient $c^\ast$. Then $\|f^\ast\|_{F_{d,q},\bx}\le 1$ by definition of the norm \eqref{eq:norm-rep}, and
\begin{align*}
L(f^\ast)
= & -\int_{\D_d} c^\ast(\bsa,\bsb)\,\Delta_{\cP,\cA}([\bsa,\bsb])\rd (\bsa,\bsb) \\
= & -\frac{1}{\|\Delta_{\cP,\cA}\|_{L_p(\D_d)}^{p-1}}\int_{\D_d}|\Delta_{\cP,\cA}(\bsa,\bsb)|^p \rd (\bsa,\bsb)\\
= & -\|\Delta_{\cP,\cA}\|_{L_p(\D_d)}.
\end{align*}
Therefore $|L(f^\ast)|=\|\Delta_{\cP,\cA}\|_{L_p(\D_d)}=L_{p,N}^{\ex}(\cP,\cA)$ and hence
$$e(Q_{\cP,\cA};F_{d,q})\ge L_{p,N}^{\ex}(\cP,\cA).$$ Combining with the upper bound yields the identity.

For the case $p=1$ we take $c^*(\bsa,\bsb) = \sgn (\Delta_{\cP,\cA} (\bsa,\bsb))$ and obtain 
again $|L(f^\ast)|=\|\Delta_{\cP,\cA}\|_{L_p(\D_d)}=L_{p,N}^{\ex}(\cP,\cA)$.
Assume now that  $p=\infty$ and  $q=1$. Let
\[
M:=\Vert \Delta_{\cP,\cA} \Vert_{L_{\infty}(\D_d)}.
\]
For any $\varepsilon>0$, define
\[
E_\varepsilon
:=\{(\bsa,\bsb): |\Delta_{\cP,\cA}(\bsa,\bsb)| \ge M-\varepsilon\}.
\]
Assume $m(E_\varepsilon)>0$.
Define
\[
c_\varepsilon (\bsa, \bsb) 
=\frac{ \sgn (\Delta_{\cP,\cA} (\bsa, \bsb) )\,  {\mathbf 1}_{E_\varepsilon} (\bsa, \bsb) }{m(E_\varepsilon)}.
\]
Then $\Vert c_\varepsilon\Vert_{L_1}=1$ and
\[
\int_{\D_d}  c_\varepsilon (\bsa,\bsb)\,\Delta_{\cP,\cA}([\bsa,\bsb])\rd (\bsa,\bsb )
\ge M-\varepsilon.
\]
Letting $\varepsilon\to0$ gives
\[
e(Q_{\cP,\cA};F_{d,1})
=\Vert  \Delta_{\cP,\cA}\Vert_{L_\infty}=L_{\infty,N}^{\ex}(\cP,\cA).
\]
\end{proof}

\paragraph{Minimal discrepancy and minimal worst-case error.}

For $d,N \in \N$ the $N$-th minimal extreme $L_p$ discrepancy in dimension $d$ is defined as $${\rm disc}_p^{\ex}(N,d):=\min_{\cP,\cA} L_{p,N}^{\ex}(\cP,\cA)$$ and the $N$-th minimal worst-case error is defined as $$e_q(N,d):=\min_{\cP,\cA} |e(Q_{\cP,\cA};F_{d,q})|,$$
where  $Q_{\cP,\cA}$ is a linear algorithm of the form \eqref{def:I:Q} and where in both cases the 
minimum is extended over all $N$-element point sets $\cP =\{\bsx_1,\ldots,\bsx_N\}$ from $[0,1)^d$ and real weights $\cA=\{c_1,\ldots,c_N\}$. 

Later we will restrict ourselves to non-negative weights $c_1,\ldots,c_N \ge 0$. We denote sets of such weights by $\cA^+$ and define
$${\rm disc}_p^{\ex,+}(N,d):=\min_{\cP,\cA^+} L_{p,N}^{\ex}(\cP,\cA^+)$$ and $$e_q^+(N,d):=\min_{\cP,\cA^+} |e(Q_{\cP,\cA^+};F_{d,q})|,$$
where in both cases the minimum is extended over all $N$-element point sets $\cP =\{\bsx_1,\ldots,\bsx_N\}$ from $[0,1)^d$ and non-negative weights $\cA^+$.

If we restrict to QMC weights $\cA^{\qmc}$, then $${\rm disc}_p^{\ex,\qmc}(N,d):=\min_{\cP} L_{p,N}^{\ex}(\cP)$$ and
$$e_q^{\qmc}(N,d):=\min_{\cP} |e(Q_{\cP,\cA^{\qmc}};F_{d,q})|$$ where in both cases the minimum is extended over all $N$-element point sets $\cP =\{\bsx_1,\bsx_2,\ldots,\bsx_N\}$ from $[0,1)^d$.

It is obvious, that $$e_q(N,d) \le e_q^+(N,d) \le e_q^{\qmc}(N,d)$$ and $${\rm disc}_p^{\ex}(N,d) \le {\rm disc}_p^{\ex,+}(N,d) \le {\rm disc}_p^{\ex,\qmc}(N,d).$$

From Theorem~\ref{thm:main} we obtain the following result:

\begin{cor}\label{cor1}
For H\"older conjugates $p,q$, for $d,N \in \N$ we have $$e_q^{\bullet}(N,d)={\rm disc}_p^{\ex,\bullet}(N,d) \qquad \mbox{for } \bullet \in \{{\rm blank},+,\qmc\}.$$
\end{cor}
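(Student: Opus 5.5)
The corollary should follow directly from Theorem~\ref{thm:main} by taking minima on both sides. Fix $\bullet \in \{{\rm blank},+,\qmc\}$ and write $\mathcal{C}^\bullet$ for the admissible class of pairs $(\cP,\cA)$. This class consists of all $N$-point sets in $[0,1)^d$, paired respectively with arbitrary real weights, with non-negative weights, or with the single choice $c_1=\dots=c_N=1/N$. For each fixed $(\cP,\cA)\in\mathcal{C}^\bullet$, Theorem~\ref{thm:main} gives the pointwise identity $e(Q_{\cP,\cA};F_{d,q}) = L_{p,N}^{\ex}(\cP,\cA)$. The theorem is stated for arbitrary real weights, so it applies in particular to each restricted class. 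Since the worst-case error is non-negative, the absolute value in the definition of $e_q^\bullet(N,d)$ is harmless. Both $e_q^\bullet(N,d)$ and ${\rm disc}_p^{\ex,\bullet}(N,d)$ are extrema of the same function over the same index set $\mathcal{C}^\bullet$, so they coincide.

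The only point needing care is that the definitions write ``$\min$'' rather than ``$\inf$''. I would avoid deciding whether a minimizer exists by reading both quantities as infima over $\mathcal{C}^\bullet$. Because the two functions agree at every point of $\mathcal{C}^\bullet$, their infima agree. Moreover, one infimum is attained exactly when the other is, at the same $(\cP,\cA)$. So whatever convention the paper intends, the identity $e_q^\bullet(N,d)={\rm disc}_p^{\ex,\bullet}(N,d)$ holds.

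There is no real obstacle: all the analytic work is already in Theorem~\ref{thm:main}, and this step is purely formal. The one thing to check is that Theorem~\ref{thm:main} is applied with the same Hölder pair $p,q\in[1,\infty]$ throughout, including the endpoint cases $p=1$ and $p=\infty$. Those cases were covered in the theorem's proof, so the corollary holds for all Hölder conjugates.
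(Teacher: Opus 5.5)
Your proposal is correct and matches the paper, which derives the corollary directly from Theorem~\ref{thm:main}: the worst-case error and the generalized extreme $L_p$ discrepancy agree at every admissible pair $(\cP,\cA)$, so their minima over the same class coincide. Your remark about reading $\min$ as $\inf$, with attainment at the same $(\cP,\cA)$, is a harmless extra precaution that the paper does not spell out.
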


\paragraph{Initial error and worst-case function.} The initial error is the worst-case error for the zero integration rule $Q\equiv 0$ which uses no point evaluation. It is denoted by $e(0;F_{d,q})$. Hence $$e(0;F_{d,q}) =\sup_{\|f\|_{F_{d,q},\bx} \le 1} |I(f)|.$$ 
Furthermore, we call a non-zero function $h \in F_{d,q}$ a worst-case function, if $$|I(h)|=  e(0;F_{d,q}).$$

\begin{prop}\label{pr:wc:fct}
Let $p \in [1,\infty)$ and let $q$ be the corresponding H\"older conjugate. We have $$e(0;F_{d,q})=L_{p,0}^{\ex}$$ and the function
$$h_d(\bsx)=\left(\frac{p+2}{p} \frac{1}{((p+1)(p+2))^{1/p}}\right)^d \prod_{j=1}^d \left(1-x_j^{p+1}-(1-x_j)^{p+1}\right)$$ is a worst-case function. We have $h_d \ge 0$, $\|h_d\|_{F_{d,q},\bx}=1$ and $$I(h_d) = L_{p,0}^{\ex}=\frac{1}{((p+1)(p+2))^{d/p}}.$$
\end{prop}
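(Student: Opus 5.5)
The first claim, $e(0;F_{d,q})=L_{p,0}^{\ex}$, follows from Theorem~\ref{thm:main} with $N=0$. For the empty point set the weighted discrepancy function is $-\prod_j(b_j-a_j)$, so the worst-case error of the zero rule equals the $L_p$-norm of the volume function. That norm is $L_{p,0}^{\ex}=((p+1)(p+2))^{-d/p}$, computed in Section~\ref{sec:disc}. The proof of Theorem~\ref{thm:main} also supplies an extremal function: the coefficient $c^\ast$ from \eqref{eq:cstar} with $\Delta=-\mathrm{vol}$, for $p>1$. I take instead $-c^\ast$, to get a nonnegative function. Explicitly,
\[
c(\bsa,\bsb)=\frac{\prod_{j=1}^d (b_j-a_j)^{p-1}}{(L_{p,0}^{\ex})^{p-1}}
=((p+1)(p+2))^{d(p-1)/p}\prod_{j=1}^d (b_j-a_j)^{p-1}.
\]
For $p=1$ the same formula gives $c\equiv 1$, which is the sign choice. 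In both cases $\|c\|_{L_q(\D_d)}=1$. This is checked either by the computation in the proof of Theorem~\ref{thm:main}, or directly: $\|c\|_q^q=(L_{p,0}^{\ex})^{-p}\int_{\D_d}\prod_j(b_j-a_j)^{p}=1$, using $(p-1)q=p$.

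Next I set $h_d:=T_dc$ and check that it equals the stated product. Both $c$ and the box indicator factorize, so $T_dc$ factorizes into univariate integrals. For one coordinate,
\[
\int_0^x\int_x^1 (b-a)^{p-1}\rd b\rd a=\frac1p\int_0^x\big((1-a)^p-(x-a)^p\big)\rd a=\frac{1}{p(p+1)}\big(1-(1-x)^{p+1}-x^{p+1}\big).
\]
Multiplying by the constant $((p+1)(p+2))^{(p-1)/p}$ gives the factor
\[
\frac{((p+1)(p+2))^{(p-1)/p}}{p(p+1)}=\frac{p+2}{p}\,((p+1)(p+2))^{-1/p}.
\]
This matches the stated constant.

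Nonnegativity holds because $x^{p+1}+(1-x)^{p+1}\le x+(1-x)=1$ on $[0,1]$. Also, $\|h_d\|_{F_{d,q},\bx}\le\|c\|_q=1$ by the definition \eqref{eq:norm-rep}.

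For the integral, Fubini gives
\[
I(h_d)=\int_{\D_d}c(\bsa,\bsb)\prod_j(b_j-a_j)\rd(\bsa,\bsb)=(L_{p,0}^{\ex})^{1-p}(L_{p,0}^{\ex})^{p}=L_{p,0}^{\ex}.
\]
As a sanity check in one variable, $\int_0^1(1-x^{p+1}-(1-x)^{p+1})\rd x=p/(p+2)$.

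The only nontrivial point is the exact norm equality $\|h_d\|_{F_{d,q},\bx}=1$, since the box norm is an infimum over representations. The lower bound comes from duality. Hölder gives $|I(f)|\le e(0;F_{d,q})\|f\|_{F_{d,q},\bx}$, so
\[
L_{p,0}^{\ex}=I(h_d)\le L_{p,0}^{\ex}\,\|h_d\|_{F_{d,q},\bx},
\]
which yields $\|h_d\|_{F_{d,q},\bx}\ge1$. Together with the upper bound, the norm is exactly $1$. Therefore $h_d$ is a nonzero worst-case function, and $|I(h_d)|=e(0;F_{d,q})$.
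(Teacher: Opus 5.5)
Your proof is correct and follows essentially the same route as the paper. Both arguments use the same ingredients: the Hölder upper bound from Theorem~\ref{thm:main} applied to the empty point set; the nonnegative representer $\prod_j(b_j-a_j)^{p-1}/(L_{p,0}^{\ex})^{p-1}$ with unit $L_q$-norm; the computation $I(h_d)=L_{p,0}^{\ex}$; the sandwich argument giving $\|h_d\|_{F_{d,q},\bx}=1$; and the factorized univariate integral for the explicit formula. The differences are cosmetic: you quote Theorem~\ref{thm:main} with $N=0$ instead of re-running its Hölder step, and you verify $h_d\ge 0$ via $x^{p+1}+(1-x)^{p+1}\le 1$ rather than from nonnegativity of the representer.
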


\begin{proof}
For the zero integration rule we have $\mu=-\lambda$ and we obtain from the first part of the proof of Theorem~\ref{thm:main} that $$\int_{[0,1]^d}f(\bsx)\rd \bsx = I(f)=L(f)=\int_{\D_d} c(\bsa,\bsb) \lambda([\bsa,\bsb]) \rd (\bsa,\bsb),$$ where $c \in L_q(\D_d)$ is any representer of $f$. Hence $$|I(f)| \le \|c\|_{L_q(\D_d)} \, \|\Delta_{\emptyset}\|_{L_p(\D_d)},$$ where obviously $\Delta_{\emptyset} (\bsa,\bsb)= -\lambda([\bsa,\bsb])$. Taking the infimum over all representers $c$ and noting that $\|\Delta_{\emptyset}\|_{L_p(\D_d)} =L_{p,0}^{\ex}$ we obtain $|I(f)| \le \|f\|_{F_{d,q},\bx} \, L_{p,0}^{\ex}$ and hence 
$$e(0; F_{d,q}) \le L_{p,0}^{\ex}.$$ 

Put 
\begin{align}\label{eq:cstar0}
c_0^{\ast}(\bsa,\bsb):= & \frac{|\Delta_{\emptyset}(\bsa,\bsb)|^{p-1}}{\|\Delta_{\emptyset}\|_{L_p(\D_d)}^{p-1}}=  \frac{(\lambda([\bsa,\bsb]))^{p-1}}{\|\Delta_{\emptyset}\|_{L_p(\D_d)}^{p-1}}
\end{align}
if $p>1$ and $c_0^{\ast}(\bsa,\bsb):= 1$ if $p=1$. Then $c_0^{\ast} \ge 0$ and $\|c_0^{\ast}\|_{L_q(\D_d)}=1$. Consider $h_d:=T_d c_0^{\ast}$. Then $\|h_d\|_{F_{d,q},\bx} \le \|c_0^{\ast}\|_{L_q(\D_d)}=1$ and
\begin{align*}
I(h_d)
= & \int_{\D_d} c_0^\ast(\bsa,\bsb)\,\lambda([\bsa,\bsb])\rd (\bsa,\bsb) \\
= & \frac{1}{\|\Delta_{\emptyset}\|_{L_p(\D_d)}^{p-1}}\int_{\D_d} (\lambda([\bsa,\bsb]))^p \rd (\bsa,\bsb)\\
= & \|\Delta_{\emptyset}\|_{L_p(\D_d)} = L_{p,0}^{\ex}\\
\ge & \|h_d\|_{F_{d,q},\bx} \,L_{p,0}^{\ex}.
\end{align*}
Therefore $e(0;F_{d,q}) \ge L_{p,0}^{\ex}$. Combining with the upper bound we obtain $$e(0;F_{d,q}) = L_{p,0}^{\ex} \qquad \mbox{and}\qquad \frac{I(h_d)}{\|h_d\|_{F_{d,q},\bx}}=e(0;F_{d,q}),$$ i.e., $h_d$ is a worst-case function. 
Note that from $\|h_d\|_{F_{d,q},\bx} \,L_{p,0}^{\ex} \ge I(h_d) \ge L_{p,0}^{\ex}$ it even follows that $\|h_d\|_{F_{d,q},\bx}=1$.

It remains to compute the explicit form of $h_d$. We have 
$$h_d(\bsx)=(T_d c_0^{\ast})(\bsx) =\int_{\D_d} c_0^{\ast}(\bsa,\bsb) {\bf 1}_{[\bsa,\bsb]}(\bsx) \rd (\bsa,\bsb).$$ With $c_0^{\ast}\ge 0$ also $h_d \ge 0$. From \eqref{eq:cstar0}, $\Delta_{\emptyset}(\bsa,\bsb)=-\lambda([\bsa,\bsb])$ and $\|\Delta_{\emptyset}\|_{L_p(\D_d)}=((p+1)(p+2))^{-d/p}$ we obtain 
\begin{align}\label{hd:expl}
c_0^{\ast}(\bsa,\bsb):=((p+1)(p+2))^{\frac{d (p-1)}{p}} \prod_{j=1}^d (b_j-a_j)^{p-1}.
\end{align}
Hence
\begin{align*}
h_d(\bsx) = & ((p+1)(p+2))^{\frac{d (p-1)}{p}}\int_{\D_d} \prod_{j=1}^d (b_j-a_j)^{p-1} {\bf 1}_{[\bsa,\bsb]}(\bsx) \rd (\bsa,\bsb)\\
= & ((p+1)(p+2))^{\frac{d (p-1)}{p}} \prod_{j=1}^d \int_{\D_1} (b-a)^{p-1} {\bf 1}_{[a,b]}(x_j) \rd(a,b).
\end{align*}
We have 
\begin{align*}
\int_{\D_1} (b-a)^{p-1} {\bf 1}_{[a,b]}(x_j) \rd(a,b) = & \int_0^{x_j} \int_{x_j}^1 (b-a)^{p-1} \rd b \rd a\\
= & \frac{1-x_j^{p+1}-(1-x_j)^{p+1}}{p(p+1)}.
\end{align*}
Thus $$h_d(\bsx)=\left(\frac{p+2}{p} \frac{1}{((p+1)(p+2))^{1/p}}\right)^d \prod_{j=1}^d \left(1-x_j^{p+1}-(1-x_j)^{p+1}\right)$$ as desired.
\end{proof}

\paragraph{Inverse discrepancy and information complexity.} For $d \in \N$ and $\varepsilon \in (0,1)$ the inverse of the $N$-th minimal extreme $L_p$ discrepancy is defined as $$N_p^{{\rm disc},\bullet}(\varepsilon,d):=\min\{N \in \N \ : \ {\rm disc}_p^{\bullet}(N,d) \le \varepsilon \ L_{p,0}^{\ex}\}$$ where $\bullet \in \{{\rm blank},+,\qmc\}$. The information complexity of the integration problem is defined as the minimal number of function evaluations necessary in order to reduce the initial error by a factor of $\varepsilon$, i.e., for $d \in \N$ and $\varepsilon \in (0,1)$,  
$$N^{{\rm int},\bullet}_q(\varepsilon,d):= \min\{N \in \N \ : \ e_q^{\bullet}(N,d) \le \varepsilon\ e_q(0,d)\},$$ where $\bullet \in \{{\rm blank},+,\qmc\}$.

Obviously, $$N_p^{{\rm disc}}(\varepsilon,d) \le N_p^{{\rm disc},+}(\varepsilon,d) \le N_p^{{\rm disc},\qmc}(\varepsilon,d)$$ and likewise for $N^{{\rm int},\bullet}_q(\varepsilon,d)$. From Corollary~\ref{cor1} we obtain:
\begin{cor}\label{cor2}
For H\"older conjugates $p,q$, for $d \in \N$ and $\varepsilon \in (0,1)$ we have $$N_p^{{\rm disc},\bullet}(\varepsilon,d) = N^{{\rm int},\bullet}_q(\varepsilon,d) \qquad \mbox{for  $\bullet \in \{{\rm blank},+,\qmc\}$.}$$
\end{cor}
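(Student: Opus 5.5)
Corollary~\ref{cor2} follows directly from Corollary~\ref{cor1} together with the identification of the two normalising constants. The plan is to show that, for each fixed $\bullet$, the two sets of admissible $N$ over which the minima are taken coincide. Then the minima coincide as well.

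First we identify the normalisations. The discrepancy threshold is $\varepsilon\, L_{p,0}^{\ex}$. The integration threshold is $\varepsilon\, e_q(0,d)$, where $e_q(0,d)$ is the initial error $e(0;F_{d,q})$ of the zero algorithm. For $p\in[1,\infty)$, Proposition~\ref{pr:wc:fct} gives $e(0;F_{d,q})=L_{p,0}^{\ex}$. For $p=\infty$ (so $q=1$) we instead apply Theorem~\ref{thm:main} with the empty point set. Its proof works verbatim for $N=0$, since then $\mu=-\lambda$ and $\Delta_\emptyset=-\lambda([\bsa,\bsb])$. This yields $e(0;F_{d,1})=\|\lambda([\cdot,\cdot])\|_{L_\infty(\D_d)}=1=L_{\infty,0}^{\ex}$. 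Hence in all cases $e_q(0,d)=L_{p,0}^{\ex}$.

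Next, Corollary~\ref{cor1} gives $e_q^{\bullet}(N,d)={\rm disc}_p^{\ex,\bullet}(N,d)$ for every $N\in\N$. Combined with the equality of thresholds, the condition $e_q^{\bullet}(N,d)\le \varepsilon\, e_q(0,d)$ holds if and only if ${\rm disc}_p^{\ex,\bullet}(N,d)\le \varepsilon\, L_{p,0}^{\ex}$. So the defining sets $\{N\in\N: \cdots\}$ are identical, and their minima agree. Here ${\rm disc}_p^{\bullet}$ in the definition of $N_p^{{\rm disc},\bullet}$ is read as ${\rm disc}_p^{\ex,\bullet}$. If either set were empty, both would be, and both sides would equal $\infty$ by convention.

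There is no substantive obstacle. The only point needing care is the endpoint $p=\infty$, which Proposition~\ref{pr:wc:fct} does not cover; the argument above via Theorem~\ref{thm:main} with $N=0$ handles it.
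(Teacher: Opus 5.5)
Your proposal is correct and follows the paper's route: the paper obtains Corollary~\ref{cor2} directly from Corollary~\ref{cor1}, tacitly using that the initial error equals the initial discrepancy. Your explicit check that $e_q(0,d)=L_{p,0}^{\ex}$ fills in that step, including the case $p=\infty$, which Proposition~\ref{pr:wc:fct} does not cover and which you handle by running Theorem~\ref{thm:main} with the empty point set.
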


\section{The curse of dimensionality}\label{sec:curse}

It is known from \cite{hnww} that the extreme $L_{\infty}$ discrepancy is polynomially tractable with the upper bound $N_\infty^{{\rm disc},\qmc}(\varepsilon,d) \le  C \, d \varepsilon^{-2}$. The proof is as for the star-discrepancy; see also Hinrichs~\cite{Hi04}. Gnewuch~\cite{Gne05,Gne08} proved explicit upper bounds. In particular, \cite[Theorem~2.2]{Gne08} states that $$N_\infty^{{\rm disc},\qmc}(\varepsilon,d) \le \left\lceil 2 
\varepsilon^{-2} \left(2 d \log\left(\frac{10 {\rm e}}{\varepsilon}\right)+\log 2\right)\right\rceil$$ 
for $d \in \mathbb{N}\setminus\{1\}$ and for all $\varepsilon \in (0,1)$.

On the other hand, the extreme $L_2$ discrepancy suffers from the curse of dimensionality, as shown in \cite[p.~93-94]{NW10}. This means that the inverse of the $N$-th minimal extreme $L_2$ discrepancy grows at least exponential in $d$. In more detail, 
$$N_2^{{\rm disc},+}(\varepsilon,d) \ge (1-\varepsilon^2) \left(\frac{9}{4}\right)^d.$$

The behavior for general $p \not\in \{2,\infty\}$ was unknown so far and is the topic of this section. As in the case of $p=2$ in \cite{NW10}, our method only allows a result for linear rules with non-negative weights. In any case, this includes the case of QMC rules. The case $p=1$ remains as an open problem.

\begin{thm}\label{thm:main2}
For every $p \in (1,\infty)$ there exists a number $C_p>1$ such that $$N_p^{{\rm disc},+}(\varepsilon,d) \ge C_p^d (1-2 \varepsilon) \qquad \mbox{for $d \in \mathbb{N}$ and $\varepsilon \in [0,\tfrac{1}{2})$.}$$ In other words, the extreme $L_p$ discrepancy for non-negative weights (and hence also for QMC weights) suffers from the curse of dimensionality.
\end{thm}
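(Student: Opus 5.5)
By Corollary~\ref{cor2} it suffices to prove the lower bound for the integration problem in $F_{d,q}$ with non-negative weights. I would use the worst-case function $h_d$ of Proposition~\ref{pr:wc:fct}, following the approach of \cite{NW10} for $p=2$ and of \cite{NP25} for the star discrepancy. The idea is to find a second function $g_d\in F_{d,q}$ with three properties:
\begin{itemize}
\item $g_d\ge h_d$ pointwise on $[0,1]^d$ (or at least $g_d \ge h_d\ge 0$ where needed);
\item $\|g_d\|_{F_{d,q},\bx}$ is bounded by a constant close to $1$;
\item $I(g_d)$ exceeds $I(h_d)$ by an amount that is exponentially small relative to $\|g_d\|$ in a controlled way.
\end{itemize}
For positive weights, the difference $g_d-h_d$ will then force many nodes.

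Concretely, I would work with functions of the form $g_d=T_d c$ for a product coefficient $c(\bsa,\bsb)=\prod_j c_1(a_j,b_j)$. For such product functions, the box norm is bounded by the product of univariate norms (the second Remark), and $I(g_d)=I(g_1)^d$. The standard mechanism is the following. Let $Q=\sum c_k f(\bsx_k)$ with $c_k\ge0$ and error $e=e(Q;F_{d,q})\le \varepsilon\, e(0;F_{d,q})=\varepsilon L_{p,0}^{\ex}$. Applying the error bound to $h_d$ gives $Q(h_d)\ge (1-\varepsilon)L_{p,0}^{\ex}$. If we find $g_d$ with $g_d(\bsx)\ge \kappa^{-d}h_d(\bsx)$ for all $\bsx$ (this is where $c_k\ge0$ enters), then
\[
Q(g_d)\ge \kappa^{-d}(1-\varepsilon)L_{p,0}^{\ex},
\qquad\text{and}\qquad
Q(g_d)\le I(g_d)+\varepsilon L_{p,0}^{\ex}\|g_d\|.
\]
This alone does not count points. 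The counting version instead uses localized functions. For each node $\bsx_k$, take $g^{(k)}$, a product of univariate bumps concentrated near the coordinates of $\bsx_k$. These satisfy $g^{(k)}(\bsx_k)\ge h_d(\bsx_k)$, $\|g^{(k)}\|\le 1$, and $I(g^{(k)})\le \gamma^d L_{p,0}^{\ex}$ for some $\gamma<1$. Then $c_k h_d(\bsx_k)\le c_kg^{(k)}(\bsx_k)\le Q(g^{(k)})\le I(g^{(k)})+\varepsilon L_{p,0}^{\ex}$, and summing over $k$ gives
\[
(1-\varepsilon)L_{p,0}^{\ex}\le Q(h_d)\le N\bigl(\gamma^d+\varepsilon\bigr)L_{p,0}^{\ex}.
\]
This is not yet the right shape. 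A better plan, matching the claimed factor $(1-2\varepsilon)$, is the following. For each $k$, split $h_d=h_d^{(k)}+r^{(k)}$ with both parts nonnegative, both in $F_{d,q}$ with norm at most $1$, such that $h^{(k)}_d$ vanishes at $\bsx_k$ and $r^{(k)}$ has tiny integral $\le C_p^{-d}L_{p,0}^{\ex}$. The natural way to get this is to split the coefficient $c_0^\ast = c_0^\ast\mathbf 1_{S}+c_0^\ast\mathbf 1_{S^c}$, where $S$ is the set of boxes containing $\bsx_k$. Since both pieces have $L_q$-norm $\le\|c_0^\ast\|_{L_q}=1$, both norms are $\le1$, and nonnegativity is automatic.

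The piece coming from boxes containing $\bsx_k$ is $r^{(k)}$, and its integral is $\int_S c_0^\ast\lambda$. Since $r^{(k)}$ is the part of $h_d$ that sees $\bsx_k$, a sum over all $k$ gives the following. The set of boxes containing at least one node carries $\int c_0^\ast\lambda \le N\max_{\bsx}\int_{\{[\bsa,\bsb]\ni \bsx\}}c_0^\ast\lambda$. Boxes containing no node contribute nothing to $Q$. So, writing $c_0^\ast = c_0^\ast\mathbf 1_{U}+c_0^\ast\mathbf 1_{U^c}$ with $U$ the set of boxes meeting $\cP$, the function $f=T_d(c_0^\ast\mathbf 1_{U^c})$ has norm $\le1$ and $Q(f)=0$. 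Hence
\[
\varepsilon L_{p,0}^{\ex}\ge I(f)= L_{p,0}^{\ex}-\int_U c_0^\ast\lambda \ge L_{p,0}^{\ex}-N\sup_{\bsx}\int_{\{\bsx\in[\bsa,\bsb]\}}c_0^\ast\lambda .
\]
With only this, positivity of the weights is not even needed. The bound becomes
\[
N\ge (1-\varepsilon)\,L_{p,0}^{\ex}\Big/\sup_{\bsx}\prod_j \phi(x_j),
\]
where $\phi(x)=((p+1)(p+2))^{(p-1)/p}\int_0^x\int_x^1(b-a)^p\rd b\rd a$. The supremum is attained at $x=1/2$ by symmetry and concavity.

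The key step, and main obstacle, is therefore the one-dimensional inequality
\[
\phi(1/2)<L_{p,0}^{\ex,(1)}=((p+1)(p+2))^{-1/p},
\]
i.e.
\[
\int_0^{1/2}\int_{1/2}^1(b-a)^p\rd b\rd a<\frac{1}{(p+1)(p+2)}=\int_0^1\int_a^1(b-a)^p\rd b\rd a .
\]
This holds strictly, since boxes not containing $1/2$ have positive measure and positive integrand. Then $C_p:=\bigl(\int_{\D_1}(b-a)^p\bigr)/\bigl(\int_0^{1/2}\int_{1/2}^1(b-a)^p\bigr)>1$, and the bound $N\ge C_p^d(1-\varepsilon)\ge C_p^d(1-2\varepsilon)$ follows.

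The one thing I must double-check is the claim $Q(f)=0$. Since $f(\bsx_k)=\int c_0^\ast\mathbf 1_{U^c}\mathbf 1_{[\bsa,\bsb]}(\bsx_k)$ and boxes in $U^c$ avoid all nodes, it is indeed $0$, with closed boxes used consistently with the definition of $T_d$. I would also verify that the computation of $\phi(1/2)$ gives an explicit $C_p$.

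If this simple argument is somehow flawed, the fallback is the positive-weight splitting argument with the $(1-2\varepsilon)$ factor, as in \cite{NP25}.
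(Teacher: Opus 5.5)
Your final truncation argument is correct. It takes a genuinely different and much shorter route than the paper.

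\textbf{The paper's route.} It uses the spline method of \cite{NP25}:
\begin{itemize}
\item it forms $f^\ast=\sum_k P_k$ from products of the piecewise linear interpolants $s_{x_{k,j}}$ of $h_1$;
\item it needs $c_k\ge 0$ to get $Q_{\cP,\cA^+}(f^\ast)\ge Q_{\cP,\cA^+}(h_d)$;
\item it then relies on the univariate bounds $A_p<1$ and $B_p<1$ of Proposition~\ref{pr:big1}. The bound on $B_p$ costs the delicate analysis of Appendix~\ref{sec:appB}.
\end{itemize}

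\textbf{Your route.} You restrict the representer $c_0^\ast$ of $h_d$ to the boxes that contain no node. The resulting $f$ has norm at most $1$ and vanishes at all nodes, so $e(Q_{\cP,\cA};F_{d,q})\ge I(f)$ for arbitrary real weights. A union bound over the product sets $\{\bsa\le\bsx\le\bsb\}$ then gives $I(f)\ge L_{p,0}^{\ex}\bigl(1-N(1-2^{-(p+1)})^d\bigr)$, because $\int_0^{1/2}\int_{1/2}^1(b-a)^p\rd b\rd a=(1-2^{-(p+1)})/((p+1)(p+2))$. Hence you may take $C_p=2^{p+1}/(2^{p+1}-1)$. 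Stated directly for the discrepancy, this is just $\int_{\D_d}|\Delta_{\cP,\cA}|^p\ge\int_{U^c}\lambda^p$, which even gives the factor $1-\varepsilon^p$ in place of $1-\varepsilon$.

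\textbf{What your route buys.}
\begin{itemize}
\item It needs no sign condition on the weights.
\item It avoids the one-dimensional maximization entirely and gives an explicit constant.
\item For $p=1$ one may take $c_0^\ast\equiv 1\in L_\infty(\D_d)$, so it also covers $p=1$ with $C_1=4/3$. This is the case the paper leaves open.
\end{itemize}

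\textbf{How the constants compare.} The paper's constants are better for moderate and large $p$. At $p=2$ the paper has $2/\sqrt3$ against your $8/7$. For large $p$ your $C_p-1\approx 2^{-(p+1)}$ decays much faster than the paper's, which is of order $(\log p)/p$. Yours are better near $p=1$, where the paper's $C_p\to 1$.

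\textbf{Why the trick works here.} Every point misses a fixed positive $(b-a)^p$-weighted fraction of intervals in each coordinate. For the star discrepancy, the setting the paper's method was built for, the trick fails: the origin lies in every anchored box.

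In a write-up, drop the exploratory first half of your proposal and present only the truncation argument.
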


According to Corollary~\ref{cor2}, an equivalent formulation in terms of integration is the following.

\begin{thm}\label{thm:main3}
For every $q \in (1,\infty)$ we have $$N_q^{{\rm int},+}(\varepsilon,d) \ge C_p^d (1-2 \varepsilon) \qquad \mbox{for $d \in \mathbb{N}$ and $\varepsilon \in [0,\tfrac{1}{2})$,}$$ where $C_p>1$ is the number  from Theorem~\ref{thm:main2} with $p$ the H\"older conjugate of $q$.
In other words, integration in $F_{d,q}$ with non-negative linear rules suffers from the curse of dimensionality.
\end{thm}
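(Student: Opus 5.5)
Theorem~\ref{thm:main3} follows from Theorem~\ref{thm:main2} by Corollary~\ref{cor2}, since $N_q^{{\rm int},+}(\varepsilon,d)=N_p^{{\rm disc},+}(\varepsilon,d)$ for Hölder conjugates. Nevertheless, I would prove the integration statement directly, using the worst-case function $h_d$ of Proposition~\ref{pr:wc:fct}. The plan is the standard ``bump function'' technique for non-negative rules, as in the $p=2$ argument of \cite{NW10}.

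Fix $N$ nodes $\bsx_1,\dots,\bsx_N$ and weights $c_k\ge 0$. I distinguish two cases according to the size of $Q(h_d)=\sum_k c_k h_d(\bsx_k)$.

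\emph{Case 1: $Q(h_d)\le I(h_d)/2$.} Since $\|h_d\|_{F_{d,q},\bx}=1$, the error is at least $|I(h_d)-Q(h_d)|\ge I(h_d)/2=e(0;F_{d,q})/2$.

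\emph{Case 2: $Q(h_d)$ is large.} Here I construct a family of test functions vanishing at the nodes. The idea is to take the pointwise products $g_k=h_d\cdot\phi_k$, or more conveniently functions with small support around each node. Let $f\ge 0$ be a function vanishing at all nodes, with $f\le h_d$ pointwise and $\|f\|\le 1$. Then $Q(f)=0$, so the error is at least $I(f)$.

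The cleaner route, following \cite{NW10}, is to write $h_d$ as a tensor product $\prod_j h_1(x_j)$. For each node, define a function $g_{\bsx_k}$ that equals $h_d$ outside a small product neighbourhood and vanishes at $\bsx_k$. Then show that
\[
I(h_d)-I(\min_k g_{\bsx_k})\le \sum_k \bigl(I(h_d)-I(g_{\bsx_k})\bigr).
\]
A simpler alternative, which I would try first, is the Cauchy--Schwarz/Hölder-type bound
\[
I(h_d)\le \bigl(\text{error}\bigr)+Q(h_d), \qquad Q(h_d)\le \sum_k c_k\, h_d(\bsx_k)\le \Bigl(\sum_k c_k\Bigr)\max h_d .
\]
This gives $I(h_d)(1-\varepsilon)\le \sum_k c_k \|h_d\|_\infty$.

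Next I bound $\sum_k c_k$ using the constant-type function. Let $g_d=T_d\mathbf 1$, i.e.\ the function with $c\equiv$ constant normalized in $L_q$; explicitly $g_d(\bsx)=\prod_j x_j(1-x_j)$ scaled by $\|\mathbf 1\|_{L_q(\D_d)}^{-1}=2^{d/q}$. Testing the error on a suitable combination $f=\alpha h_d-\beta g_d$ yields inequalities of the form
\[
\sum_k c_k\,\bigl(\alpha h_d(\bsx_k)-\beta g_d(\bsx_k)\bigr)\approx \alpha I(h_d)-\beta I(g_d).
\]
The real mechanism I would aim for is the following. Choose a function $f\ge0$ in the unit ball such that the pointwise ratio $f(\bsx)/h_d(\bsx)$ is at least $1$ everywhere, while $I(f)\ge C_p^d\, I(h_d)$ is exponentially larger in relative terms. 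Then, if the error is at most $\varepsilon I(h_d)$ on both $h_d$ and on $f$ (suitably localized), positivity of the weights forces $N$ to be exponentially large.

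Concretely, I would proceed as follows:
\begin{enumerate}
\item Fix a univariate one-point bump $f_{1,t}$ in $F_{1,q}$ with $f_{1,t}(t)=0$, and compute the ratio $\rho_p=\sup_t \bigl(I(h_1)-I(f_{1,t})\bigr)/I(h_1)<1$ in dimension one.
\item Take tensor products and use the submultiplicativity remark, $\|f\|\le\prod\|f_j\|$. A node then kills only a fraction $\rho_p^d$ of the integral.
\item With nonnegative weights, obtain $N\ge (1-2\varepsilon)\rho_p^{-d}$, so that $C_p=1/\rho_p$.
\end{enumerate}

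The main obstacle is the one-dimensional step. One must show that any function vanishing at a single point $t$ and dominated by $h_1$ can retain a uniform fraction of $I(h_1)$ while keeping the $F_{1,q}$ box-norm at most $1$, uniformly in $t$. A further difficulty is controlling the combination of $d$-dimensional tensor pieces with the non-tensor box-norm, for which only the inequality of the remark is available.
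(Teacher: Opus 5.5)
There is a genuine gap: your three-step plan tensorizes the wrong object, never says how the $N$ nodes are combined, and omits the norm estimate that actually drives the curse. A preliminary point: your reduction via Corollary~\ref{cor2} is circular in the paper's logic, because Theorem~\ref{thm:main2} is derived there from Theorem~\ref{thm:main3}. So everything rests on your direct argument.

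\textbf{The tensorization step.} In step~2 you tensorize $f_{1,t}$, which vanishes at $t$. But $\prod_j f_{1,t_j}(x_j)$ loses $I(h_d)\bigl(1-\prod_{j=1}^d(1-\ell_j)\bigr)$ of the integral, where the $\ell_j$ are the univariate relative losses. Whenever the $\ell_j$ stay away from $0$, this is almost all of $I(h_d)$ for large $d$, not a fraction $\rho_p^d$. What must be tensorized is the \emph{removed} piece $s_t:=h_1-f_{1,t}$, a non-negative function with $s_t(t)=h_1(t)$. The paper takes the hat function $s_y=T_1\bigl(\tfrac{h_1(y)}{y(1-y)}\mathbf{1}_{E_y}\bigr)$ and sets $P_k(\bsx)=\prod_{j=1}^d s_{x_{k,j}}(x_j)$. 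Then $P_k(\bsx_k)=h_d(\bsx_k)$ and $I(P_k)\le A_p^d\,I(h_d)$.

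\textbf{Combining the $N$ nodes.} A single function vanishing at all nodes, e.g.\ built with $\min_k$, has no norm control in $F_{d,q}$; for $d\ge 2$, taking minima generally destroys the $L_q$ mixed derivative. The paper makes nothing vanish. For $f^*=\sum_k P_k\ge 0$ and $c_k\ge 0$ one has $Q_{\cP,\cA^+}(f^*)\ge Q_{\cP,\cA^+}(h_d)$. Hence $\bigl(I(h_d)-I(f^*)\bigr)_+\le |L(h_d-f^*)|\le \bigl(1+\|f^*\|_{F_{d,q},\bx}\bigr)\,e(Q_{\cP,\cA^+};F_{d,q})$. This is precisely where non-negativity of the weights enters.

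\textbf{The norm of the removed pieces.} This is the heart of the proof. By the triangle inequality and the tensor-product bound for the box norm, $\|f^*\|_{F_{d,q},\bx}\le N B_p^d$, with $B_p=\sup_y\|s_y\|_{F_{1,q},\bx}/\|h_1\|_{F_{1,q},\bx}$ as in Proposition~\ref{pr:big1}. So one needs $B_p<1$. Your normalization $\|f_{1,t}\|_{F_{1,q},\bx}\le 1$ only gives $\|s_t\|_{F_{1,q},\bx}\le 2$, and a denominator of size $N2^d$ destroys the bound.

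The paper proves $B_p<1$ in Appendix~\ref{sec:appB}, in three steps:
\begin{itemize}
\item Proposition~\ref{pr:s:form} gives $\|s_y\|_{F_{1,q},\bx}=h_1(y)/(y(1-y))^{1/p}$.
\item Apply H\"older on $E_y$ to the a.e.\ positive representer $c_0^*$ of $h_1$, and use $\int_{E_y}|c_0^*|^q<\|c_0^*\|_{L_q(\D_1)}^q=1$. This gives $h_1(y)<(y(1-y))^{1/p}$.
\item Continuity together with $F_p(0)=F_p(1)=0$ makes this strict bound uniform in $y$.
\end{itemize}

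Consequently $C_p=\min(1/A_p,1/B_p)$, not $1/\rho_p$; with $f_{1,t}=h_1-s_t$, your $\rho_p$ is exactly $A_p$. In fact $B_p\ge F_p(\tfrac12)=2\bigl(\tfrac{4}{(p+1)(p+2)}\bigr)^{1/p}A_p>A_p$ for all $p>1$. So the constant of the curse is governed entirely by the norm estimate your plan does not address.
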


The proof of Theorem~\ref{thm:main3}, and therefore also of Theorem~\ref{thm:main2}, is presented in the following section.

\begin{remark}\rm
It follows from \eqref{Bp8} in Appendix~\ref{sec:appB} that for $p \in \{2,3,\ldots,7\}$ we may take 
$$C_p =\frac{p}{p+2} \frac{2^p}{2^p-1} \left(\frac{(p+1)(p+2)}{4}\right)^{1/p}.$$ 
For any other $p \in (1,\infty)$ a suitable value can be determined numerically, see Figure~\ref{f1}. Note that $\lim_{p \rightarrow  1^+} C_p= \lim_{p \to \infty} C_p  = 1$.

\begin{figure}
\centering
\includegraphics[width=0.8 \textwidth]{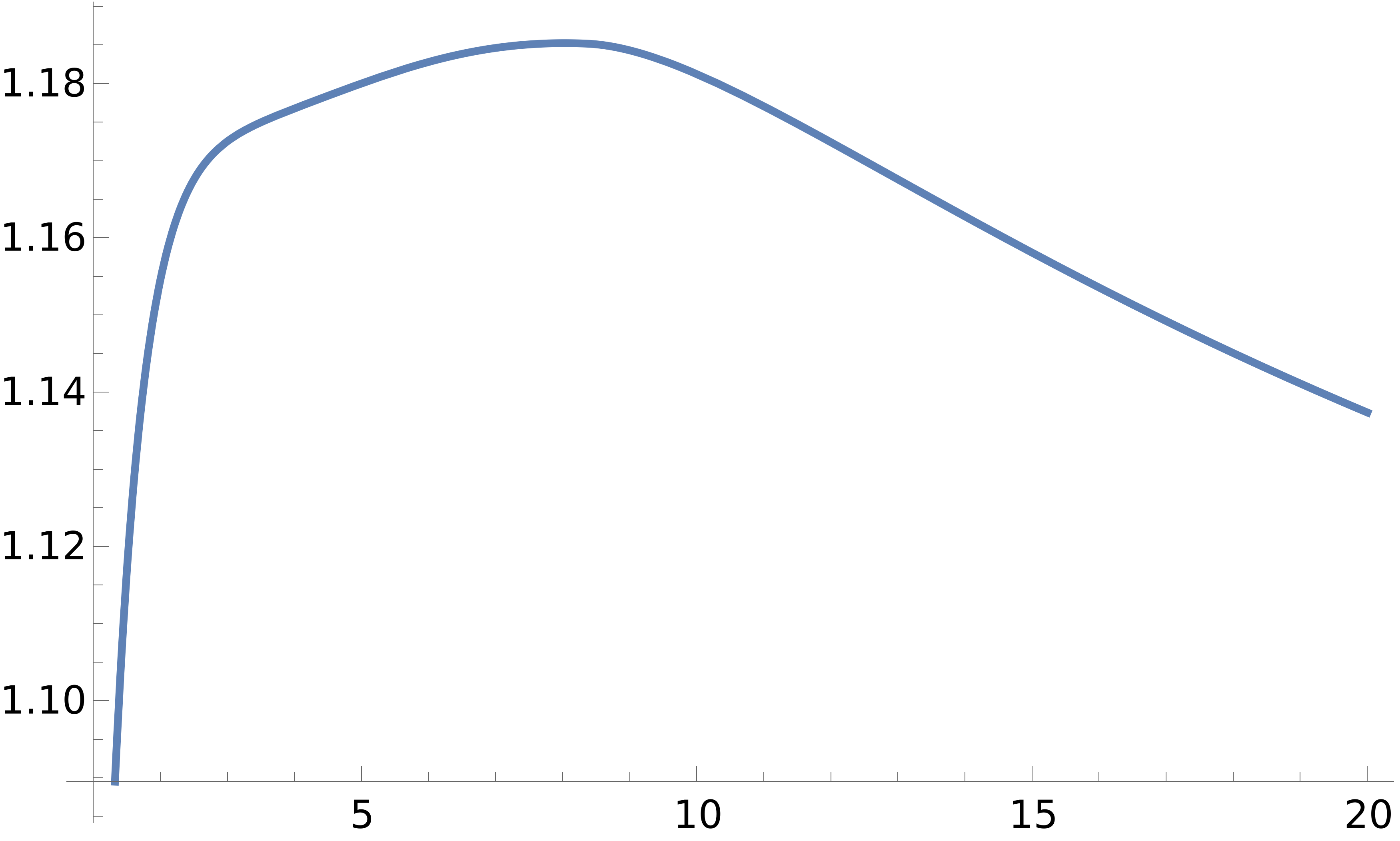}
\caption{The quantity $C_p$ for $p \in (1,20]$.}
\label{f1}
\end{figure}
\end{remark}

\begin{remark}\rm
Theorem~\ref{thm:main3} holds with  the same proof (but other constants $C_p$) also for 
the 
mixed Sobolev norm  
\[
\|f\|_{F_{d,q},\mx} := \| D f\|_{L_q([0,1]^d)},
\]
see Remark~\ref{remark3}. This generalizes the result from \cite[page~94, line~5]{NW10} from $p=2$ to general $p \in (1,\infty)$.
The stress in this paper is on the extreme discrepancy and it was a challenge to find the 
``right'' integration problem, i.e., the correct norm. The proof of the curse itself is even simpler for the standard mixed Sobolev norm. Here are just a few key facts. One uses the same arguments like in the proof in Section~\ref{sec:proof} below. The initial error is $e(0;F_{d,q}^{\mx})=(2 (p+1)^{1/p})^{-d}$ and a univariate worst-case function is $h_1(x)=1-|1-2 x|^p$ for $x \in [0,1]$. Then use the same splines as in \eqref{def:syexpl}. \iffalse functions $$s_y(x):=\left\{
\begin{array}{ll}
x \frac{h_1(y)}{\min(y,1-y)} & \mbox{for } 0 \le x \le \min(y,1-y),\\
h_1(y) & \mbox{for } \min(y,1-y)< x < \max(y,1-y),\\
(1-x) \frac{h_1(y)}{\min(y,1-y)} & \mbox{for } \max(y,1-y) \le x \le 1,
\end{array}
\right.$$ instead of \eqref{def:sy}.\fi
\end{remark}

\section{The proof of Theorem~\ref{thm:main3}}\label{sec:proof}

We use the method developed in \cite{NP25} (see also \cite{NP25b}) and always assume that $p,q \in (1, \infty)$.
This method is based on a spline interpolation for the worst-case function.

\paragraph{Spline interpolation for the worst-case function.}

We consider the univariate worst-case function 
\begin{equation}\label{def:h1}
h_1(x)=\frac{p+2}{p} \frac{1}{((p+1)(p+2))^{1/p}} \left(1-x^{p+1}-(1-x)^{p+1}\right).
\end{equation}
For $x \in [0,1]$ set $$E_x:=\{(a,b) \in \D_1 : a \le x \le b\}.$$ Then for every $c \in L_q(\D_1)$ we have $$(T_1c)(x) =\int_{\D_1} c(a,b) {\bf 1}_{[a,b]}(x) \rd (a,b)=\int_{E_x} c(a,b)\rd (a,b).$$ The area of $E_x$, denoted by $m(E_x)$, is $$m(E_x)= \int_0^x \int_x^1 \rd b \rd a = x (1-x).$$ In particular, $m(E_0)=m(E_1)=0$ and thus also $(T_1c)(0)=(T_1c)(1)=0$.

For $y \in [0,1]$ put $$c_y(a,b):=\frac{h_1(y)}{m(E_y)} {\bf 1}_{E_y}(a,b)\qquad \mbox{for $(a,b) \in \D_1$,}$$ and 
\begin{equation}\label{def:sy}
s_y(x):=(T_1 c_y)(x)=\int_{\D_1} c_y(a,b) {\bf 1}_{[a,b]}(x) \rd (a,b)=\int_{E_x} c_y(a,b)\rd (a,b),
\end{equation}
for which we can deduce also a more explicit presentation in the following way. Since $m(E_y)=y(1-y)$ we deduce $$s_y(x)=\frac{h_1(y)}{y (1-y)} m(E_x \cap E_y).$$ We have $$E_x \cap E_y=\{(a,b) \in \D_1 : a \le \min(x,y), b \ge \max(x,y)\}$$ and therefore $$m(E_x \cap E_y)=\min(x,y)(1-\max(x,y)).$$ Hence
\begin{eqnarray}\label{def:syexpl}
s_y(x) & = & \frac{h_1(y)}{y (1-y)}\min(x,y)(1-\max(x,y))\nonumber\\
& = & \left\{ 
\begin{array}{ll}
\frac{h_1(y)}{y} x & \mbox{if } x \in [0,y],\\[0.5em]
\frac{h_1(y)}{1-y}(1-x)  & \mbox{if } x \in [y,1].
\end{array}\right.
\end{eqnarray}

\begin{prop}\label{pr:s:form}
For the function $s_y$ defined by \eqref{def:sy} we have $s_y(x) \ge 0$ and $s_y(0)=s_y(1)=0$ and $$s_y(y)=h_1(y).$$ Furthermore, $$\|s_y\|_{F_{1,q},\bx}=\frac{h_1(y)}{(y(1-y))^{1/p}} \qquad \mbox{and}\qquad \int_0^1 s_y(x) \rd x = \frac{h_1(y)}{2}.$$
\end{prop}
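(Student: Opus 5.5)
The elementary claims come straight from the explicit formula \eqref{def:syexpl}. For $y\in(0,1)$ we have $h_1(y)\ge 0$, since $x^{p+1}+(1-x)^{p+1}\le x+(1-x)=1$. Hence both linear pieces are non-negative, they vanish at $x=0$ and $x=1$, and evaluating at $x=y$ gives $s_y(y)=h_1(y)$. The integral is the area of a triangle with base $1$ and height $h_1(y)$, so $\int_0^1 s_y(x)\rd x=h_1(y)/2$. The same value also follows from Fubini applied to the representer: $\int_0^1 T_1c_y=\int_{E_y}c_y(a,b)(b-a)\rd(a,b)$, and $\int_{E_y}(b-a)=y(1-y)/2$.

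The real content is the norm identity. For the upper bound I use the representer $c_y$ itself. It is constant equal to $h_1(y)/m(E_y)$ on $E_y$, so
$$\|c_y\|_{L_q(\D_1)}=\frac{h_1(y)}{m(E_y)}\,m(E_y)^{1/q}=h_1(y)\,(y(1-y))^{-1/p}.$$
This gives $\|s_y\|_{F_{1,q},\bx}\le h_1(y)(y(1-y))^{-1/p}$.

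For the lower bound, let $c\in L_q(\D_1)$ be any representer with $T_1c=s_y$. I evaluate pointwise at $x=y$. By the formula $(T_1c)(x)=\int_{E_x}c$, and since $s_y=T_1c$ is continuous (absolutely continuous), we get
$$h_1(y)=s_y(y)=\int_{E_y}c(a,b)\rd(a,b)\le\|c\|_{L_q}\,m(E_y)^{1/p}$$
by Hölder's inequality. Hence $\|c\|_{L_q}\ge h_1(y)(y(1-y))^{-1/p}$, and taking the infimum over representers gives equality.

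The main obstacle is justifying the pointwise evaluation $s_y(y)=\int_{E_y}c$ for an arbitrary representer, because $T_1c$ is a priori defined only a.e. I handle this by noting that $x\mapsto\int_{E_x}c$ is continuous in $x$ for $c\in L_q$, by dominated convergence, since $m(E_x\triangle E_{x'})\to0$. So it agrees everywhere with the continuous $s_y$. An alternative is to test against $\frac{1}{\delta}\mathbf 1_{[y,y+\delta]}$ and let $\delta\to0$. The degenerate cases $y\in\{0,1\}$, where $s_y\equiv0$, are treated by convention or excluded.
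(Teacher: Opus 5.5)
Your proof is correct and follows the paper's argument: the upper bound comes from the representer $c_y$ with $\|c_y\|_{L_q(\D_1)}=h_1(y)\,m(E_y)^{-1/p}$, and the lower bound from H\"older applied to $h_1(y)=\int_{E_y}c(a,b)\rd(a,b)$ for an arbitrary representer $c$. Your continuity argument for $x\mapsto\int_{E_x}c$, which justifies evaluating an arbitrary representer at the single point $x=y$, fills in a detail the paper leaves implicit; the paper only points to the absolute continuity of $T_dc$ in its first remark.
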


\begin{proof}
The properties $s_y(x) \ge 0$ and $s_y(0)=s_y(1)=0$ and $s_y(y)=h_1(y)$ and the formula for the integral follow directly from \eqref{def:syexpl}.

We compute the $\|\cdot\|_{F_{1,q},\bx}$-norm of $s_y$. For any function $c \in L_q(\D_1)$ such that $\int_{E_y} c(a,b) \rd (a,b)=h_1(y)$ we have by H\"older 
\begin{equation}\label{Hoehd}
h_1(y)=\left|\int_{E_y} c(a,b) \rd (a,b)\right| \le \|c\|_{L_q(\D_1)} m(E_y)^{1/p}
\end{equation}
and hence 
\begin{equation*}
\|c\|_{L_q(\D_1)} \ge \frac{h_1(y)}{m(E_y)^{1/p}}.
\end{equation*}
On the other hand, 
\begin{align*}
\|c_y\|_{L_q(\D_1)}^q = & \int_{\D_1} \left(\frac{h_1(y)}{m(E_y)}\right)^q {\bf 1}_{E_y}(a,b) \rd(a,b)\\
= & \int_{E_y} \left(\frac{h_1(y)}{m(E_y)}\right)^q \rd(a,b) = \left(\frac{h_1(y)}{m(E_y)}\right)^q m(E_y).
\end{align*}
Hence $$\|c_y\|_{L_q(\D_1)}=\frac{h_1(y)}{m(E_y)^{1/p}}.$$ This implies that $$\|s_y\|_{F_{1,q},\bx}=\frac{h_1(y)}{m(E_y)^{1/p}}=\frac{h_1(y)}{(y(1-y))^{1/p}}.$$
\end{proof}

Using the formulas from Proposition~\ref{pr:s:form} one can bound the integral and norm of $s_y$. The proof uses elementary methods, but is very tedious and tricky, especially for the norm. The detailed proof can be found in Appendix~\ref{sec:appB}.

\begin{prop}\label{pr:big1}
For the function $s_y$ defined by \eqref{def:sy} we have $$A_p:=\max_{y \in [0,1]} \frac{\int_0^1 s_y(x)\rd x}{\int_0^1 h_1(x)\rd x}=\frac{p+2}{2 p} \left(1-\frac{1}{2^p}\right) < 1 \quad \mbox{and}\quad B_p:=\max_{y \in [0,1]} \frac{\|s_y\|_{F_{1,q},\bx}}{\|h_1\|_{F_{1,q},\bx}} < 1.$$
\end{prop}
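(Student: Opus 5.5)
The plan is to treat the two quantities separately. $A_p$ comes from an explicit calculation. For $B_p$, rather than attacking the ratio analytically, I would prove the strict inequality pointwise by a Hölder equality-case argument and then pass to the maximum by continuity and compactness.

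\emph{The quantity $A_p$.} Write $K:=\frac{p+2}{p}((p+1)(p+2))^{-1/p}$, so that $h_1(x)=K(1-x^{p+1}-(1-x)^{p+1})$. A direct computation gives
$$\int_0^1 h_1(x)\rd x=K\Bigl(1-\frac{2}{p+2}\Bigr)=K\frac{p}{p+2}.$$
By Proposition~\ref{pr:s:form} we have $\int_0^1 s_y(x)\rd x=h_1(y)/2$. So the ratio equals $\frac{p+2}{2pK}h_1(y)$ and is maximized where $h_1$ is. The function $y\mapsto y^{p+1}+(1-y)^{p+1}$ is convex and symmetric about $1/2$, so it is minimal at $y=1/2$, and $h_1(1/2)=K(1-2^{-p})$. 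This gives the stated value of $A_p$. The inequality $A_p<1$ is equivalent to
$$g(p):=p-2+(p+2)2^{-p}>0 .$$
This is obvious for $p\ge 2$. For $p\in(1,2)$ I would use $g(0)=0$ and check $g'(p)=1+2^{-p}(1-(p+2)\ln 2)>0$. Indeed $2^{-p}((p+2)\ln 2-1)$ is at most its value $2\ln 2-1<1$ at $p=0$ on the relevant range; this is easily checked since its derivative $2^{-p}\ln 2\,(1-(p+2)\ln 2+1)$ is negative for $p\ge 0$.

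\emph{The quantity $B_p$: reduction to a pointwise inequality.} By Proposition~\ref{pr:wc:fct} with $d=1$, $\|h_1\|_{F_{1,q},\bx}=1$. By Proposition~\ref{pr:s:form}, $B_p=\sup_{y\in(0,1)} R(y)$ with $R(y):=h_1(y)/(y(1-y))^{1/p}$. The key step is to show $R(y)<1$ for every $y\in(0,1)$ without any tedious calculus. By Proposition~\ref{pr:wc:fct}, $h_1=T_1c_0^\ast$ with $c_0^\ast(a,b)=((p+1)(p+2))^{(p-1)/p}(b-a)^{p-1}$, which satisfies $\|c_0^\ast\|_{L_q(\D_1)}=1$ and $c_0^\ast>0$ a.e. on $\D_1$. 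Hence, by Hölder as in \eqref{Hoehd},
$$h_1(y)=\int_{E_y}c_0^\ast\rd(a,b)\le \|c_0^\ast{\bf 1}_{E_y}\|_{L_q}\,m(E_y)^{1/p}.$$
Moreover $\|c_0^\ast{\bf 1}_{E_y}\|_{L_q}<\|c_0^\ast\|_{L_q}=1$, because $\D_1\setminus E_y$ has positive measure and $c_0^\ast>0$ there. Thus $R(y)<1$ for all $y\in(0,1)$.

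\emph{The quantity $B_p$: passing to the maximum.} The ratio $R$ is continuous on $(0,1)$. Near $y=0$ we have $h_1(y)\sim K(p+1)y$, so $R(y)\sim K(p+1)y^{1-1/p}\to 0$ since $p>1$; by symmetry $R(y)\to0$ as $y\to1$ as well. Hence $R$ extends continuously to $[0,1]$ with value $0$ at the endpoints, and the maximum is attained at an interior point. By the pointwise strict inequality this maximum is $<1$, so $B_p<1$ (and the $\max$ in the statement is justified).

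The main obstacle I expect is exactly this strictness for $B_p$. A direct analytic comparison of $K(1-y^{p+1}-(1-y)^{p+1})$ with $(y(1-y))^{1/p}$ is messy, which is presumably why the paper relegates it to an appendix. The Hölder equality-case argument sidesteps that, and only needs positivity of $c_0^\ast$ off $E_y$ together with the endpoint limits. If an explicit value of $B_p$ (or of $C_p$) is wanted, one would still have to locate the maximizer numerically or by the appendix computation.
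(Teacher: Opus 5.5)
Your proof is correct and follows the paper's own argument essentially step by step. For $A_p$ you reduce the ratio to a multiple of $h_1(y)$ and maximize it at $y=\tfrac12$. For $B_p$ you use the same H\"older bound with the representer $c_0^\ast$ of $h_1$, get strictness from $c_0^\ast>0$ a.e.\ on $\D_1\setminus E_y$, and conclude with the same continuity and compactness argument using $R(0)=R(1)=0$.

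There is one small slip in your extra verification of $A_p<1$, which the paper simply asserts. The derivative $2^{-p}\ln 2\,(2-(p+2)\ln 2)$ is positive for $p<2/\ln 2-2\approx 0.885$. So $2^{-p}((p+2)\ln 2-1)$ is not bounded by its value at $p=0$; for example, it is about $0.52$ at $p=\tfrac12$.

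The bound you actually need, namely that this quantity is $<1$, still holds. Its maximum over $p\ge 0$ is $2^{-p^\ast}<1$, attained at $p^\ast=2/\ln 2-2$. More simply, $(p+2)2^{-p}$ is decreasing on $[1,2]$ and equals $1$ at $p=2$, so $g(p)>p-1>0$ for $p\in(1,2)$.
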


\paragraph{The proof of Theorem~\ref{thm:main3}.} After all our preparations, we can now apply the method from \cite{NP25} to prove our theorem.

\begin{proof}[Proof of Theorem~\ref{thm:main3}]
Consider a linear algorithm $Q_{\cP,\cA}$ of the form \eqref{def:I:Q} based in nodes $\cP=\{\bsx_1,\ldots,\bsx_N\}$ in $[0,1]^d$ and with non-negative weights $\cA^+=\{c_1,\ldots,c_N\}$.  For $k \in \{1,\ldots,N\}$ and $j \in \{1,\ldots,d\}$ let $x_{k,j}$ be the $j$-th coordinate of the point $\bsx_k$. For $k \in \{1,\ldots,N\}$ we define functions 
$$
P_k(\bsx) := s_{x_{k,1}}(x_1) s_{x_{k,2}}(x_2) \cdots s_{x_{k,d}}(x_d),\quad \mbox{for $\bsx=(x_1,\ldots,x_d)\in [0,1]^d$.}  
$$

Consider the two functions $h_d$ (the worst-case function from Proposition~\ref{pr:wc:fct}) and $f^*:= \sum_{i=1}^N P_i$.
Since $Q_{\cP,\cA^+}$ uses only non-negative weights we have 
\begin{align*}
Q_{\cP,\cA^+}(f^*) = & \sum_{k=1}^N c_k \sum_{j=1}^N P_j(\bsx_k) \ge \sum_{k=1}^N c_k P_k(\bsx_k)\\
= & \sum_{k=1}^N c_k s_{x_{k,1}}(x_{k,1}) s_{x_{k,2}}(x_{k,2}) \cdots s_{x_{k,d}}(x_{k,d}) \\
= & \sum_{k=1}^N c_k h_d(\bsx_k) = Q_{\cP,\cA^+}(h_d).
\end{align*}

Now for real $y$ we use the notation $(y)_+:= \max (y, 0)$.  Then we have the error estimate 
\begin{equation}\label{errest1}
e(Q_{\cP,\cA^+};F_{d,q})  \ge \frac{ \left(I(h_d) - I(f^*)\right)_+}{2 \max ( 1, \| f^* \|_{F_{d,q},\bx})},
\end{equation}
which is trivially true if $I(h_d) \le  I(f^*)$ and which is easily shown if $I(h_d) >  I(f^*)$, because then 
\begin{align*}
\left(I(h_d) - I(f^*)\right)_+ \le & I(h_d) - Q_{\cP,\cA^+}(h_d)+Q_{\cP,\cA^+}(f^*)-I(f^*)\\
\le & \|h_d\|_{F_{d,q},\bx} \, e(Q_{\cP,\cA^+};F_{d,q})+\|f^*\|_{F_{d,q},\bx} \, e(Q_{\cP,\cA^+};F_{d,q})\\
\le & 2 \max(1,\|f^*\|_{F_{d,q},\bx}) \, e(Q_{\cP,\cA^+};F_{d,q}),
\end{align*}
where we used that $\|h_d\|_{F_{d,q},\bx}=1$. This implies \eqref{errest1}.

From the triangle inequality and from the definition of $A_p$ and $B_p$ in Proposition~\ref{pr:big1} we obtain 
$$
\| f^* \|_{F_{d,q},\bx} \le N B_p^d \qquad \mbox{and} \qquad I(f^*) \le N  A_p^d \, I(h_d).
$$ 
Inserting into \eqref{errest1} and taking into account that $I(h_d)= e(0;F_{d,q})$ yields 
$$
e(Q_{\cP,\cA^+};F_{d,q}) \ge \frac{ e(0;F_{d,q})  \left(1- N A_p^d\right)_+} {2 \max (1, 
N B_p^d) }.
$$
This yields  
\begin{equation*}
e_q^+(N,d)\ge \frac{ e(0;F_{d,q})  \left(1- N A_p^d\right)_+} {2 \max (1, 
N B_p^d) }.
\end{equation*}

Now let $\varepsilon \in (0,1/2)$ and assume that $e_q^+(N,d) \le \varepsilon \, e_q(0,d)$. This implies that
$$2 \varepsilon  \max \left(1, 
N B_p^d\right)  \ge  \left(1 - N A_p^d\right)_+.$$

Proposition~\ref{pr:big1} yields
\begin{equation*}%\label{def:C1new}
C_p:=\min\left(\frac{1}{A_p},\frac{1}{B_p}\right) >1.
\end{equation*}

If $N \le C_p^d$, then we obtain
\begin{align*}
1- N A_p^d  = & \left(1 - N A_p^d\right)_+ \le 2 \varepsilon \max \left(1, N B_p^d\right) =  2 \varepsilon. 
\end{align*}
Hence $$N \ge \frac{1}{A_p^d} (1-2 \varepsilon) \ge C_p^d (1-2 \varepsilon).$$
If $N \ge C_p^d$, then we trivially have $N \ge C_p^d (1-2 \varepsilon)$.
 
This yields $$N_q^{{\rm int},+}(\varepsilon,d)\ge C_p^d (1-2 \varepsilon),$$ and we are done.
\end{proof}

\begin{appendix}
\section{Appendix}\label{sec:appA}

\begin{proof}[Proof of the norm-equivalence \eqref{norm:equiv}]
For $d=1$ we have $f(x)=\int_0^x \int_x^1 c(a,b) \rd b \rd a$ and hence by Leibniz integration rule $$f'(x)=\int_x^1 c(x,b) \rd b - \int_0^x c(a,x) \rd a.$$ In order to extend this to the multivariate case we need some notation. Let $[d]:=\{1,\ldots,d\}$. For a subset $\uu \subseteq [d]$ we write $|\uu|$ for the number of elements of $\uu$, $\uu^c:=[d]\setminus \uu$, and, for two vectors $\bsx,\bsy \in [0,1]^d$, $(\bsx_\uu,\bsy_{\uu^c}):=(z_1,\ldots,z_d)$ where 
$$z_j=\left\{ 
\begin{array}{ll}
x_j & \mbox{if } j \in \uu,\\
y_j & \mbox{if } j \in \uu^c.
\end{array}\right.$$
Furthermore, let $B_{\uu}(\bsx):= B_1 \times \ldots \times B_d$, where 
$$B_j=\left\{ 
\begin{array}{ll}
[0,x_j] & \mbox{if } j \in \uu,\\
{[}x_j,1]  & \mbox{if } j \in \uu^c.
\end{array}\right.$$

Then for $f=T_d c$ we obtain  
\begin{equation}\label{sum:Df}
D f(\bsx)=\sum_{\uu \subseteq [d]} (-1)^{|\uu|} \int_{B_{\uu}(\bsx)} c((\bsa_{\uu},\bsx_{\uu^c}),(\bsx_{\uu},\bsb_{\uu^c})) \rd (\bsa_{\uu},\bsb_{\uu^c}),
\end{equation}
where $D$ again denotes the operator $D f=\partial_1\cdots\partial_d f$. The $2^d$ terms of this sum are fiber integrals of $c$ over the $2^d$ faces of the $2d$-dimensional set $\{(\bsa,\bsb) : \bsx \in [\bsa,\bsb]\}$. Now we show that each of these $2^d$ fiber maps is a bounded operator $T_{\uu}: L_q(\D_d) \rightarrow L_q([0,1]^d)$ $$(T_{\uu} c)(\bsx)=\int_{B_{\uu}(\bsx)} c((\bsa_{\uu},\bsx_{\uu^c}),(\bsx_{\uu},\bsb_{\uu^c})) \rd (\bsa_{\uu},\bsb_{\uu^c}),\quad (\mbox{for }\uu \subseteq [d])$$ with operator norm at most $1$. Indeed, using H\"older's inequality we have
$$|(T_{\uu}c)(\bsx)| \le \left(\int_{B_{\uu}(\bsx)} | c((\bsa_{\uu},\bsx_{\uu^c}),(\bsx_{\uu},\bsb_{\uu^c}))|^q \rd (\bsa_{\uu},\bsb_{\uu^c})\right)^{1/q}$$ and hence
$$\|T_{\uu} c\|_{L_q([0,1]^d)}^q \le \int_{[0,1]^d} \int_{B_{\uu}(\bsx)} | c((\bsa_{\uu},\bsx_{\uu^c}),(\bsx_{\uu},\bsb_{\uu^c}))|^q \rd (\bsa_{\uu},\bsb_{\uu^c}) =\|c\|_{L_q(\D_d)}^q.$$
Therefore we obtain  $\|T_{\uu} c\|_{L_q([0,1]^d)} \le \|c\|_{L_q(\D_d)}$.

Using \eqref{sum:Df} and the triangle inequality we obtain 
\begin{equation*}\label{bd:norm:u}
\|f\|_{F_{d,q},\mx}=\|D f\|_{L_q([0,1]^d)} \le 2^d \|c\|_{L_q(\D_d)}.
\end{equation*}
This proves the left inequality in \eqref{norm:equiv}.

In order to prove the right inequality in \eqref{norm:equiv} let, for given $f \in F_{d,q}$, $g:=Df \in L_q([0,1]^d)$ and 
\begin{equation}\label{def:cg}
c_g(\bsa,\bsb):=\sum_{\uu \subseteq [d]}(-1)^{d-|\uu|} g(\bsa_{\uu},\bsb_{\uu^c}).
\end{equation}

It is easy to see that $${\bf 1}_{\{t \le x\}}-x = \int_{\D_1} (\delta(t-a)-\delta(t-b)) {\bf 1}_{[a,b]}(x) \rd (a,b),$$ where $\delta$ denotes the Dirac distribution. Then we have 
\begin{align*}
\prod_{j=1}^d \left({\bf 1}_{\{t_j \le x_j\}}-x_j\right) = & \int_{\D_d} {\bf 1}_{[\bsa,\bsb]}(\bsx) \prod_{j=1}^d (\delta(t_j-a_j)-\delta(t_j-b_j)) \rd (\bsa,\bsb)\\
= & \int_{\D_d} {\bf 1}_{[\bsa,\bsb]}(\bsx) \sum_{\uu \subseteq [d]} (-1)^{d-|\uu|} \prod_{j \in \uu} \delta(t_j-a_j) \prod_{j \in \uu^c} \delta(t_j-b_j) \rd (\bsa,\bsb).
\end{align*}
Furthermore, because of the boundary conditions in the univariate ($d=1$) case it is easy to see that $f(x)=\int_0^1 ({\bf 1}_{\{t \le x\}}-x) g(t)\rd t$. Hence
\begin{align*}
f(\bsx) = & \int_{[0,1]^d} \prod_{j=1}^d \left({\bf 1}_{\{t_j \le x_j\}}-x_j\right) g(\bst) \rd \bst\\
= & \int_{\D_d} {\bf 1}_{[\bsa,\bsb]}(\bsx) \sum_{\uu \subseteq [d]} (-1)^{d-|\uu|} \left( \int_{[0,1]^d} \prod_{j \in \uu} (\delta(t_j-a_j) \prod_{j \in \uu^c} \delta(t_j-b_j) g(\bst)\rd \bst\right) \rd (\bsa,\bsb)\\
= & \int_{\D_d} {\bf 1}_{[\bsa,\bsb]}(\bsx) \sum_{\uu \subseteq [d]} (-1)^{d-|\uu|} g(\bsa_{\uu},\bsb_{\uu^c}) \rd (\bsa,\bsb)\\
= & \int_{\D_d} {\bf 1}_{[\bsa,\bsb]}(\bsx) c_g(\bsa,\bsb) \rd (\bsa,\bsb).
\end{align*}

We study the $L_q(\D_d)$-norm of $c_g$. First of all, for any $\uu \subseteq [d]$ we have 
\begin{align*}
\int_{\D_d} | g(\bsa_{\uu},\bsb_{\uu^c})|^q \rd (\bsa,\bsb) =& \int_{[0,1]^d} \left(\prod_{j \in \uu}(1-a_j) \prod_{j \in \uu^c} b_j\right)  | g(\bsa_{\uu},\bsb_{\uu^c})|^q \prod_{j \in \uu} \rd a_j \prod_{j \in \uu^c} \rd b_j\\
\le &  \int_{[0,1]^d} | g(\bst)|^q \rd \bst\\
= & \|g\|_{L_q([0,1]^d)}^q = \|D f\|_{L_q([0,1]^d)}^q.
\end{align*}
Using \eqref{def:cg} and the triangle inequality we thus obtain 
\begin{equation*}\label{bd:norm:o}
\|f\|_{F_{d,q},\bx} \le \|c_g\|_{L_q(\D_d)} \le 2^d \|D f\|_{L_q([0,1]^d)} = 2^d \|f\|_{F_{d,q},\mx}.
\end{equation*}
This proves the right inequality in \eqref{norm:equiv} and finishes the proof for general $q$.

Now we deal with the case $q=2$. 
The squared worst-case error in a  reproducing kernel Hilbert space  $F_{d,2}$ can be written as 
\[
e(Q_{\cP,\cA};F_{d,2})^2 = \Vert h_d \Vert^2 - 2 \sum _{k=1}^N c_k h_d (  \bsx_k) + \sum_{j,k=1}^N c_j c_k K_d ( \bsx_j, \bsx_k ) ,
\]
see, e.g., \cite[Eq.~(9.31)]{NW10}. Hence the error is given by the kernel and this formula also determines  the kernel 
if all worst case errors are known. 
Now it is known that 
\[
K_d^\mx (\bsx, \bsy) = \prod_{k=1}^d ( \min (x_k, y_k) - x_k y_k) 
\]
is the kernel of $F_{d,2}$ with the mix norm and yields the correct error for the extreme $L_2$ discrepancy, see \cite[Eq.~(9.41)]{NW10}. 
Hence the dual integration problem is already known for $q=2$. 
Together with Theorem 2 we obtain that $K_d^{\mx}$ is also the kernel of $F_{d,2}$ with the box norm 
and the norms coincide. Both norms or kernels  
describe the same integration 
problem. 
We add that a more direct calculation of the kernel $K_d^{\bx} $ is possible, 
based on the fact that the infimum in the definition of 
$
\|f\|_{F_{d,q},\bx} 
$   
can be easily described in a Hilbert space via orthogonality. 
\end{proof}

\section{Appendix}\label{sec:appB}

\begin{proof}[Proof of Proposition~\ref{pr:big1}]
Fix $p \in (1,\infty)$. First we deal with $A_p$. We have 
$$\frac{\int_0^1 s_y(x)\rd x}{\int_0^1 h_1(x)\rd x}=\frac{h_1(y)/2}{((p+1)(p+2))^{-1/p}}$$ and hence
$$\max_{y \in [0,1]} \frac{\int_0^1 s_y(x)\rd x}{\int_0^1 h_1(x)\rd x}= \frac{((p+1)(p+2))^{1/p}}{2} \max_{y \in [0,1]} h_1(y).$$ 
From \eqref{def:h1} it is easily seen that $$\max_{y \in [0,1]} h_1(y)=h_1(\tfrac{1}{2})= \frac{p+2}{p} \frac{1}{((p+1)(p+2))^{1/p}} \left(1-\frac{1}{2^p}\right)$$ and hence $$A_p=\max_{y \in [0,1]} \frac{\int_0^1 s_y(x)\rd x}{\int_0^1 h_1(x)\rd x}=\frac{p+2}{2 p} \left(1-\frac{1}{2^p}\right) < 1.$$

Now we show that $B_p<1$. We have $\|h_1\|_{F_{1,q},\bx}=1$ and $$\|s_y\|_{F_{1,q},\bx}=\frac{h_1(y)}{(y(1-y))^{1/p}}.$$ Thus, $$B_p=\max_{y \in [0,1]} \|s_y\|_{F_{1,q},\bx}=\max_{y \in [0,1]}\frac{h_1(y)}{(y(1-y))^{1/p}}.$$ First we show that for all $y \in (0,1)$ we have $\|s_y\|_{F_{1,q},\bx} < 1$. Using the representer $c_0^{\ast}(a,b)=((p+1)(p+2))^{(p-1)/p}(b-a)^{p-1}$ from \eqref{hd:expl} of the univariate worst-case function $h_1$ we obtain from \eqref{Hoehd} that $$h_1(y) \le \left(\int_{E_y} |c_0^{\ast}(a,b)|^q \rd(a,b)\right)^{1/q} m(E_y)^{1/p}.$$ Since $c_0^{\ast}(a,b)>0$ for almost all $(a,b) \in \D_1$ and $E_y$ is a proper measurable subset of $\D_1$ with positive measure we obtain $$\int_{E_y} |c_0^{\ast}(a,b)|^q \rd(a,b) < \int_{\D_1} |c_0^{\ast}(a,b)|^q \rd(a,b)=\|c_0^{\ast}\|_{L_q(\D_1)}^q=1.$$ Hence, $h_1(y) < m(E_y)^{1/p}=(y(1-y))^{1/p}$. This yields $$\|s_y\|_{F_{1,q},\bx}=\frac{h_1(y)}{(y(1-y))^{1/p}} <1$$ pointwise for any $y \in (0,1)$, as claimed. 

For $y \in (0,1)$ put $F_p(y):=\frac{h_1(y)}{(y(1-y))^{1/p}}$. Thus $\|s_y\|_{F_{1,q},\bx}=F_p(y)$ and $F_p(y)$ is continuous on $(0,1)$.

As $y \downarrow 0$, $$1-y^{p+1}-(1-y)^{p+1}=(p+1)y+O(y^2),$$ so $F_p(y)=O(y^{1-1/p}) \rightarrow 0$, because $p>1$. By symmetry the same holds as $y \uparrow 1$. Thus $F_p$ extends continuously to $[0,1]$ by setting $$F_p(0)=F_p(1)=0.$$ Since $[0,1]$ is compact, $F_p$ attains its maximum. The maximum cannot be attained at $0$ or $1$, because there the value is zero. If it is attained at some point $y \in (0, 1)$, then the pointwise strict inequality just proved shows that the maximum value is strictly smaller than one. Hence $$B_p = \max_{y \in [0,1]} F_p(y) < 1.$$
\end{proof}

\begin{remark}\rm
Although $$F_p(y)=\frac{1-y^{p+1}-(1-y)^{p+1}}{y(1-y))^{1/p}}$$ has a seemingly simple representation, we encounter major problems with the maximization problem $F_p \rightarrow \max$ over $[0,1]$. By symmetry, any extremum of $F_p(y)$ must occur at $y=\tfrac{1}{2}$ or come in symmetric pairs $y$ and $1-y$. A first problem seems to be that the behavior of the function $F_p$ differs fundamentally for small values of $p$ from that for larger $p$. The change happens for $p$ somewhere in the interval $(8,9)$ (see Figure~\ref{f2}). For $p \in (1,8]$ the graphics suggest that the global maximum is attained in the single stationary point $y=\tfrac{1}{2}$ which would lead to $$F_p(y) \le F_p(\tfrac{1}{2})=\frac{p+2}{p} \frac{1}{((p+1)(p+2))^{1/p}} 4^{1/p} \left(1-\frac{1}{2^p}\right) < 1.$$ For $p \in \{2,3,\ldots,7\}$ this can be proven directly, but, unfortunately, we failed for general $p \in (1,8]$. For $p > 8$ the problem becomes even more delicate. As the graphs suggest, we then have more than one stationary point in $(0,1)$, and these cannot possibly be determined explicitly. However, in this range of $p$ we are able to prove sufficiently good upper bounds. In summary, we are able to show the following explicit effective bounds 
\begin{equation}\label{Bp8}
B_p=\max_{y \in [0,1]} F_p(y) \le \frac{(p+2)^{1-1/p}}{p}\times \left\{
\begin{array}{ll}
\left(\frac{4}{p+1}\right)^{1/p} \left(1-\frac{1}{2^p}\right) & \mbox{if } p \in \{2,3,\ldots,7\},\\[0.5em]
\left(\frac{p-1}{p}\right)^{1-1/p} 4^{1/p} & \mbox{if } p \in [8,11),\\[0.5em]
2^{1/p} & \mbox{if } p \in [11,\infty),
\end{array}\right.
\end{equation} 
(even with equality in the case $p \in \{2,3,\ldots,7\}$) and in any case the bound on the right hand side is strictly less than 1.
\end{remark}

\begin{figure}
\centering
\includegraphics[width=1.0 \textwidth]{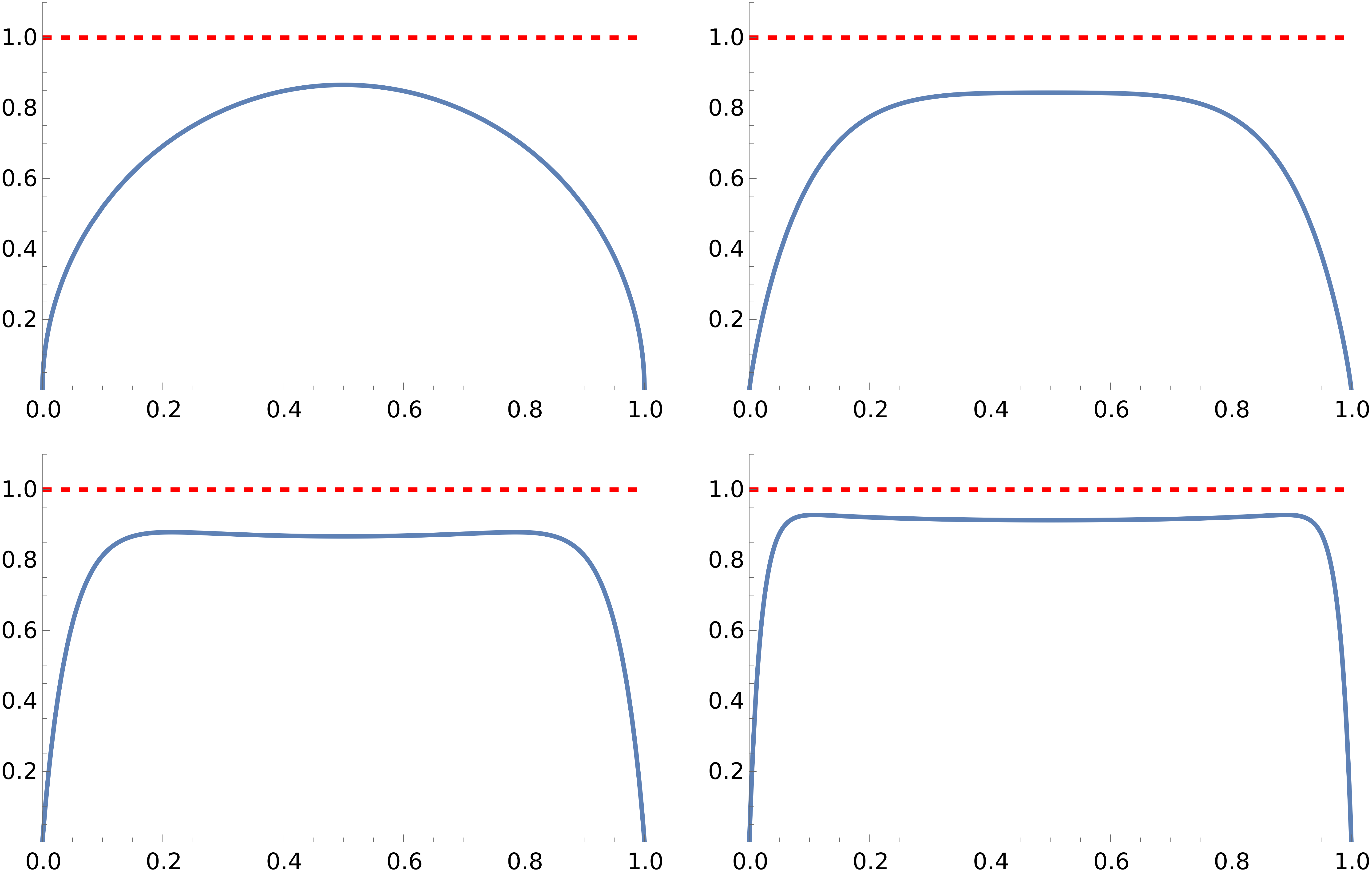}
\caption{The function $F_p(y)$, $y \in [0,1]$ for $p \in \{2,8\}$ (first row -- the maximum is attaind in $y=\tfrac{1}{2}$) and $p \in \{20,50\}$ (second row -- the maximum is {\it not} attained in $y=\frac{1}{2}$, rather in $y=\tfrac{1}{2}$ a local minimum is attained).}
\label{f2}
\end{figure}

\begin{proof}[Proof of Eq.~\eqref{Bp8}]
We consider the three cases separately.
\begin{description}
\item[$p \in \{2,3,\ldots,7\}$:] Write $F_p(y)=\frac{p+2}{p} \frac{1}{((p+1)(p+2))^{1/p}} S_p(y)$, where $S_p(y):=\frac{1-y^{p+1}-(1-y)^{p+1}}{(y(1-y))^{1/p}}$. We define  $L_p(y):=\log S_p(y)$. If we have
\begin{equation}\label{defLprime}
L_p'(y) =\frac{(p+1)((1-y)^p-y^p)}{1-y^{p+1}-(1-y)^{p+1}}-\frac{1}{p} \frac{1-2 y}{y(1-y)}\ge 0
\end{equation}
for $y \in (0,1/2]$, then, by symmetry of $S_p$, the maximum of $S_p$ on $[0,1]$ is attained at $y=1/2$. Hence, as long as \eqref{defLprime} is satisfied, we have 
$$
   S_p(y)\le S_p(1/2)=4^{1/p}\left(1-\frac1{2^p}\right)
$$
and thus $$F_p(y) \le \frac{p+2}{p} \frac{1}{((p+1)(p+2))^{1/p}} 4^{1/p} \left(1-\frac{1}{2^p}\right) < 1.$$

Obviously, \eqref{defLprime} is satisfied if $$E_p(y):=p (p+1)((1-y)^p-y^p)-\frac{1-2 y}{y(1-y)}(1-y^{p+1}-(1-y)^{p+1}) \ge 0.$$
For $p\in \{2,3,4,5,6,7\}$ we have 
\begin{align*}
E_2(y) = & 3-6y,\\
E_3(y) = & 8-26 y+30y^2-20 y^3,\\
E_4(y) = & 15-65 y+105 y^2-70y^3,\\
E_5(y) = & 24-129 y+271 y^2-274 y^3+140 y^4-56 y^5,\\
E_6(y) = & -7(-5+32 y-83 y^2+112 y^3-85 y^4+34 y^5),\\
E_7(y) = & -2(-24+178y-550 y^2+927 y^3-935 y^4+563 y^5-189 y^6+54 y^7),
\end{align*}
and any of these is non-negative over $(0,1/2]$.

\item[$p \in [11,\infty)$:] Let $y \in (0,1/2]$ and set $a:=(p+1) y$. Then $a \in (0,(p+1)/2]$. Since $y^{p+1}\ge 0$ we have $1-y^{p+1}-(1-y)^{p+1} \le 1-(1-y)^{p+1}$. Since $y \in (0,1/2] \subseteq (0,1)$ we have $\log(1-y)\ge -y/(1-y) \ge -2 y$ it follows that $$(1-y)^{p+1} \ge {\rm e}^{-2(p+1) y}={\rm e}^{-2a}.$$ Thus, $$1-(1-y)^{p+1} \le 1-{\rm e}^{-2 a}.$$ On the other hand, for $y \le 1/2$ we have $1-y \ge 1/2$ and hence $y(1-y) \ge y/2=a/(2(p+1))$. Therefore $$((p+1)(p+2) y (1-y))^{1/p} \ge \left((p+1)(p+2)\frac{a}{2(p+1)} \right)^{1/p}= \left(\frac{(p+2) a}{2} \right)^{1/p}.$$ Together we obtain
\begin{equation}\label{def:Gp}
F_p(y) \le \frac{p+2}{p} \left(1-{\rm e}^{-2 a}\right)\left(\frac{2}{(p+2)a}\right)^{1/p}=:G_p(a).
\end{equation}
Thus it suffices to prove that $G_p(a) < 1$ for all $a \in (0,(p+1)/2]$. 

Consider 
\begin{equation}\label{logGp}
\log G_p(a) =\log\frac{p+2}{p} + \log\left(1-{\rm e}^{-2 a}\right)+\frac{\log 2-\log(p+2)-\log a}{p}.
\end{equation}

Now we show that 
\begin{equation}\label{tem1le0}
\log\left(1-{\rm e}^{-2 a}\right)-\frac{\log a}{p} < 0.
\end{equation}
For $a \ge 1$ the inequality \eqref{tem1le0} trivially holds true. 

For $2^{-p/(p-1)} \le a \le 1$ we have $\tfrac{1}{p}\log a \ge \tfrac{1}{p}\tfrac{p}{p-1} \log \tfrac{1}{2} = \tfrac{1}{p-1}\log \tfrac{1}{2}$ and hence $$-\frac{\log a}{p} \le \frac{\log 2}{p-1} \le \frac{\log 2}{7},$$ since $p \ge 8$. Furthermore, from $a \le 1$ we obtain $$\log\left(1-{\rm e}^{-2 a}\right) \le \log\left(1-{\rm e}^{-2}\right).$$ Together this gives
\begin{align*}
\log\left(1-{\rm e}^{-2 a}\right)-\frac{\log a}{p} \le \log\left(1-{\rm e}^{-2}\right)+\frac{\log 2}{7} =-0.0463\ldots < 0,
\end{align*}
and hence \eqref{tem1le0} is shown also in this case.

For $a < 2^{-p/(p-1)}$ we obtain $a^{(p-1)/p} < \tfrac{1}{2}$ and hence $2 a < a^{1/p}$ and further $-a^{1/p} < -2a$. This yields $\log\left(1-a^{1/p}\right) \le - a^{1/p} < -2a$ and hence $1-a^{1/p}< {\rm e}^{-2 a}$. Hence $1-{\rm e}^{-2 a} < a^{1/p}$ and thus $$\log\left(1-{\rm e}^{-2 a}\right) < \frac{\log a}{p},$$ which proves \eqref{tem1le0} also in the final case.

Combining \eqref{logGp} and \eqref{tem1le0} we obtain
$$\log G_p(a) \le \log\frac{p+2}{p} +\frac{\log 2-\log(p+2)}{p} < 0$$ for all $p \in [11,\infty)$. Hence, for $p \in [11,\infty)$ and any $a \in (0,(p+1)/2]$ we have $G_p(a) < 1$.

\item[$p \in [8,11)$:] 
We continue studying $G_p(a)$ from \eqref{def:Gp}. It is easily seen that $G_p'(a)=0$ if and only if $a=a_{\ast}$ where $a_{\ast}$ satisfies $1-{\rm e}^{2 a_{\ast}}+2 a_{\ast} p=0$. This $a_{\ast}$ is the unique stationary point. We obtain $$1-{\rm e}^{-2 a_{\ast}} =\frac{2 a_{\ast} p}{1+2 a_{\ast} p}.$$ Let $$\widetilde{G}_p(a):=\frac{p+2}{p} \frac{2 a p}{1+2 a p}\left(\frac{2}{(p+2)a}\right)^{1/p}.$$ Then we have $$G_p(a) \le G_p(a_{\ast})=\widetilde{G}_p(a_{\ast}).$$

Now we maximize $\widetilde{G}_p(a)$. It is easily seen that $\widetilde{G}'_p(a)=0$ if and only if $a=\frac{p-1}{2 p}$. Again, this is the unique stationary point and we have $$G_p(a) \le \widetilde{G}_p(a_{\ast}) \le \widetilde{G}_p\left(\tfrac{p-1}{2 p}\right) = \left(\frac{(p-1)(p+2)}{p}\right)^{1-1/p} \frac{4^{1/p}}{p} < 1$$ for $p \in [8,11)$. 
\end{description}
\end{proof}
\end{appendix}

%\vspace{0.5cm}
\noindent{\bf Author's Address:}

\noindent Erich Novak, Mathematisches Institut, FSU Jena, Inselplatz 5, 07743 Jena, Germany. Email: erich.novak@uni-jena.de\\

\noindent Friedrich Pillichshammer, Institut f\"{u}r Finanzmathematik und Angewandte Zahlentheorie, JKU Linz, Altenbergerstra{\ss}e 69, A-4040 Linz, Austria. Email: friedrich.pillichshammer@jku.at

\end{document}